\newcommand{\sref}[1]{\hyperref[#1]{\S~\ref*{#1}}}
\newcommand{\aref}[1]{\hyperref[#1]{Appendix~\ref*{#1}}}
\newcommand{\lref}[1]{\hyperref[#1]{Lemma~\ref*{#1}}}
\newcommand{\tref}[1]{\hyperref[#1]{Theorem~\ref*{#1}}}
\newcommand{\cref}[1]{\hyperref[#1]{Corollary~\ref*{#1}}}
\newcommand{\fref}[1]{\hyperref[#1]{Figure~\ref*{#1}}}
\newcommand{\pref}[1]{\hyperref[#1]{Proposition~\ref*{#1}}}
\newcommand{\tblref}[1]{\hyperref[#1]{Table~\ref*{#1}}}
\def\clap#1{\hbox to 0pt{\hss#1\hss}}
\newcommand{\weight}{\operatorname{weight}}
\newcommand{\length}{\operatorname{length}}
\newcommand{\Var}{\operatorname{Var}}
\newcommand{\MRhref}[2]{\href{http://www.ams.org/mathscinet-getitem?mr=#1}{MR#2}}
\def\@strippedMR{}
\def\@scanforMR#1#2#3\endscan{%
  \ifx#1M\ifx#2R\def\@strippedMR{#3}%
  \else\def\@strippedMR{#1#2#3}%
  \fi\fi}
\def\@rst #1 #2other{#1}
\newcommand\MR[1]{\relax\ifhmode\unskip\spacefactor3000 \space\fi
  \MRhref{\expandafter\@rst #1 other}{#1}}
\newcommand\MRs[1]{\relax\ifhmode\unskip\spacefactor3000 \space\fi
  \@scanforMR#1\endscan
  \MRhref{\@strippedMR}{\@strippedMR}}
\numberwithin{equation}{section}
\newtheorem{theorem}{Theorem}
\numberwithin{theorem}{section}
\newtheorem{lemma}[theorem]{Lemma}
\newcommand{\E}{\mathbb E}
\renewcommand{\Pr}{\mathbb P}
\newcommand{\eps}{\varepsilon}
\begin{document}

\author{\begin{tabular}{c}
\href{http://www.cs.brown.edu/~claire/}{Claire Mathieu}\thanks{Work partly funded by NSF AF 0964037, and partly done while visiting Microsoft Research.}\\[-4pt]
\small CNRS (ENS) and Brown
\end{tabular} \and
\begin{tabular}{c}
\href{http://dbwilson.com}{David B. Wilson}\\[-4pt]
\small Microsoft Research
\end{tabular}
}
\title{\vspace{-48pt}The min mean-weight cycle in a random network
\footnotetext{\textit{2010 Mathematics Subject Classification.}  05C80, 68Q87.}}
\date{}
\maketitle
\begin{abstract}
  The mean weight of a cycle in an edge-weighted graph is the sum of
  the cycle's edge weights divided by the cycle's length.   We study the minimum mean-weight cycle on the complete
  graph on $n$ vertices, with random i.i.d.\ edge weights drawn from
  an exponential distribution with mean~$1$.  We show that the
  probability of the min mean weight being at most $c/n$ tends to a
  limiting function of $c$ which is analytic for $c\leq 1/e$,
  discontinuous at $c=1/e$, and equal to $1$ for $c>1/e$.  We further
  show that if the min mean weight is $\leq 1/(en)$, then the length
  of the relevant cycle is $\Theta_p(1)$ (i.e., it has a limiting
  probability distribution which does not scale with $n$), but that if
  the min mean weight is $>1/(en)$, then the relevant cycle almost
  always has mean weight $(1+o(1))/(en)$ and length at least
  $(2/\pi^2-o(1)) \log^2 n \log\log n$.
\end{abstract}

\section{Introduction}

Many combinatorial optimization problems have been studied when the input is a complete (directed or undirected) graph with independent random weights on the edges.  This line of work has been active since the mid-1980s for problems such as the minimum spanning tree \cite{frieze,frieze-mcdiarmid}, shortest path \cite{frieze-grimmet,hooghiemstra, janson99,hooghiemstra2,hooghiemstra3},  traveling salesman path~\cite{frieze2}, minimum weight perfect matching (the assignment problem)~\cite{aldous,MR2036492,nair}, spanners~\cite{chebolu}, and Steiner tree~\cite{BGRS,AFW}.  In this paper, we study the \textit{minimum mean-weight cycle}.

Given a directed graph with arc weights, the minimum mean-weight cycle
problem is that of finding a cycle with minimum mean weight. The mean
weight of a cycle is the ratio between its total weight and its number
of arcs.  The min mean-weight cycle problem, and the closely related minimum ratio cycle
problem (where each arc has a cost and a transit time, and the mean ratio of a cycle is the total cost divided by the total transit time), have applications in areas ranging from discrete event
systems and computer-aided design to graph theory; see
Dasdan~\cite{dasdan} for a detailed discussion and references.  An
experimental study of various algorithms for min mean cycle can be
found in~\cite{georgiadis}, including experiments on random
graphs. An algorithm by Young, Tarjan, and Orlin~\cite{YTO}
emerges as particularly efficient.  Their algorithm is based on the
parametric shortest path problem, which is the problem of finding
shortest paths in graph where the edge costs are of the form
$w_{i,j}+\lambda$, where each $w_{i,j}$ is constant and $\lambda$ is a
parameter that varies.  This problem is well-defined when $\lambda$
is at least
$$-\min_{\text{cycle }C} \frac{\sum_{ij\in C} w_{i,j}}{|C|},$$
but when $\lambda$ is below this value there is a negative cycle, so
the problem becomes ill-defined.  The authors of \cite{YTO}
conjectured that their algorithm is faster on average than in the
worst case, by a factor of $n$; analyzing the structure of the min
mean cycle is an intermediate step towards that conjecture.

In this paper, we study the min mean-weight cycle in the complete
graph on $n$ vertices, with random i.i.d.\ edge weights drawn from an
exponential distribution with mean~$1$, so that $\Pr[ w_e>x ]=e^{-x}$.
We do this for both the directed complete graph, which is relevant to
the experiments of Young, Tarjan, and Orlin \cite{YTO} and subsequent
experiments, and for the undirected complete graph, so that we can
more readily compare our results to earlier work on cycles in the
random graph $G_{n,p}$ \cite{janson,MR1001395}.

The min \textit{max}-weight cycle has been studied
by Janson \cite{janson} and others \cite{MR1001395}. 
One way to instantiate the random graph $G_{n,p}$ is to
start with the undirected complete graph with i.i.d.\ exponential edge weights, and
put each edge in
$G_{n,p}$ if its weight is smaller than~$\log 1/(1-p)$
(or if we instead use weights that are uniform in~$[0,1]$, 
the edge is included if its weight is smaller than~$p$).
 As the parameter $p$ is
increased from $0$ to $1$, the first cycle to appear 
is the min \textit{max}-weight cycle.  Janson
\cite{janson} gives formulas for when that cycle occurs (i.e., its
max-weight), and for its length distribution: the
probability that the min max-weight cycle has max weight less than $c/n$ tends as $n\to\infty$ to a
continuous function of~$c$, which is analytic and increases from $0$
to $1$ as $c$ increases from $0$ to $1$, is non-analytic but continuous at $c=1$, and
equals $1$ for $c>1$ (see \fref{cdf-4}). The limiting length distribution
(see \tblref{tbl:p_k}) is completely
supported on finite values (i.e., which don't grow with $n$), but this
distribution has a fat tail which gives it an infinite expected value.
(For finite $n$, the expected length is order $n^{1/6}$
\cite{MR1001395}.)
\enlargethispage{\baselineskip}

We find that the min \textit{mean}-weight cycle has a qualitatively
different behavior: the probability that the min mean-weight cycle
has mean weight at most $c/n$ tends as $n\to\infty$ to a function of $c$ which is
piece-wise analytic, but which is \textit{discontinuous\/} at $c=1/e$
(see \fref{cdf-4}).
More precisely,
the mean weight of the min mean-weight cycle is with
constant probability within an interval $(1/e,1/e+o(1))/n$, where the $o(1)$ term
goes to $0$ as $n\to\infty$.
Furthermore, the limiting length distribution of the min mean-weight
cycle is not supported on finite values.  
In other words, probability that
the min mean-weight cycle has length
$k$ tends to a positive limiting value $p_k$, but $\sum_k p_k < 1$
(see \tblref{tbl:p_k}).
What this means is that with constant
probability the cycle has length order $1$, and with constant
probability the cycle has a length which is a function of $n$ tending
to infinity.

It is natural to ask what this function of $n$ is.  The behavior of
the long cycles is complicated, and we do not conjecture a value for
the true answer.  The best that we could prove is that the length of
the min mean-weight cycle is almost always either $O(1)$ or else at
least $(2/\pi^2-o(1)) \log^2 n \log\log n$.

\begin{figure}[t!]
\psfrag{c}[c][c][0.8]{\footnotesize $c$}
\centerline{\psfrag{label}[c][c]{\footnotesize min max-weight cycle, undirected graph}\includegraphics[width=0.46\textwidth]{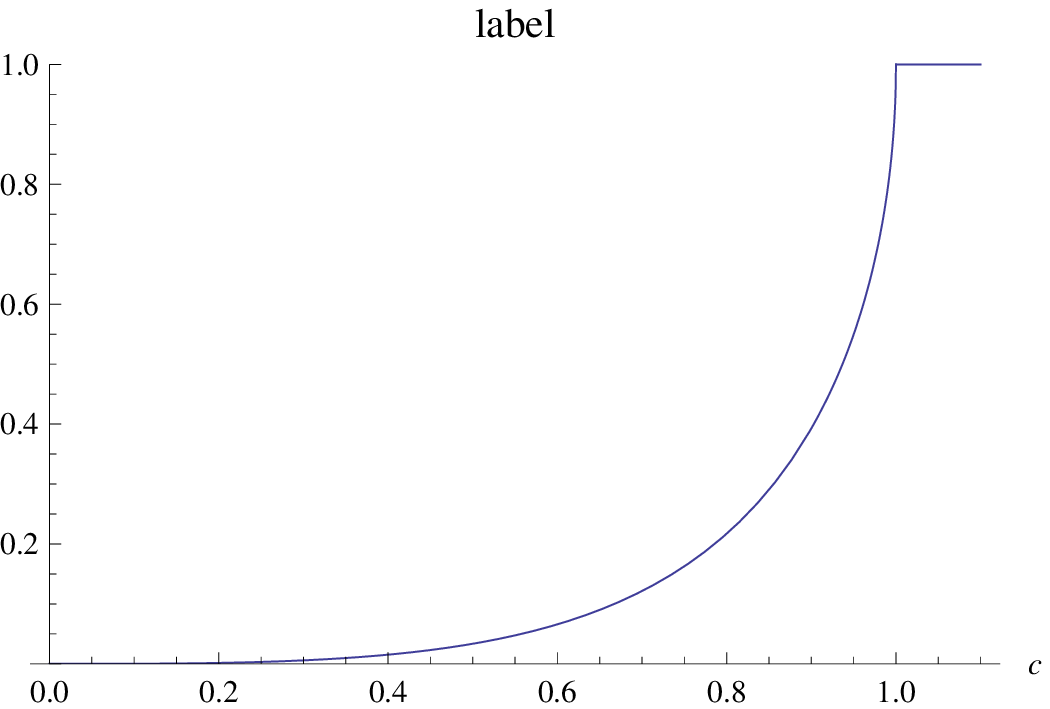} \hfill \psfrag{label}[c][c]{\footnotesize min mean-weight cycle, undirected graph}\includegraphics[width=0.46\textwidth]{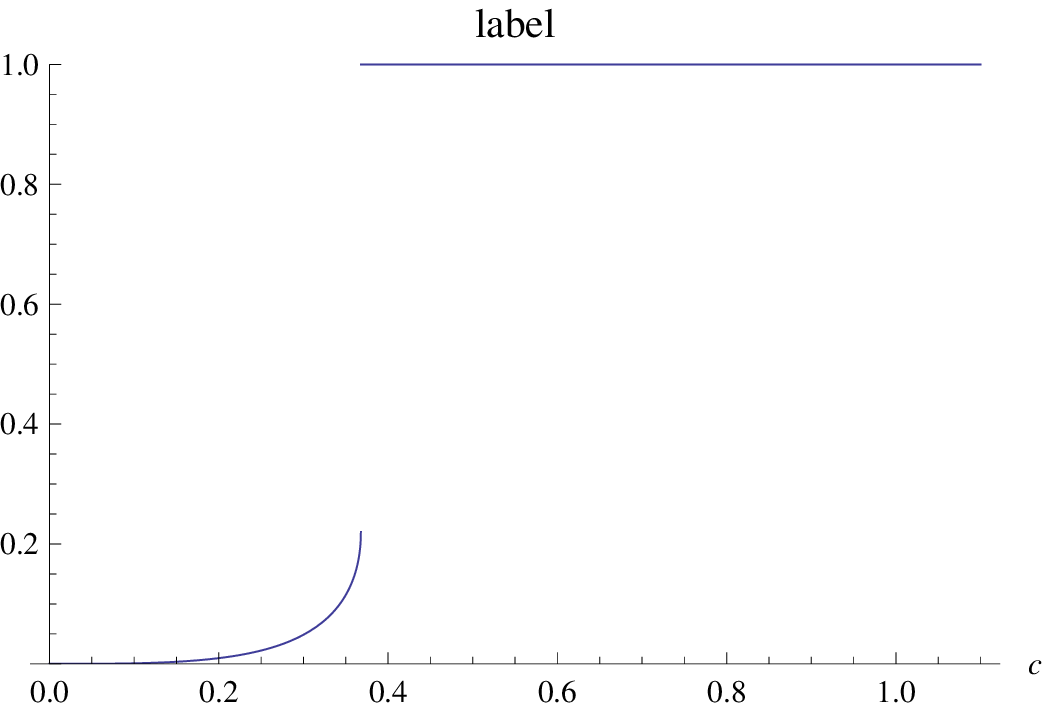}}
\vspace{15pt}
\centerline{\psfrag{label}[c][c]{\footnotesize min max-weight cycle, directed graph}\includegraphics[width=0.46\textwidth]{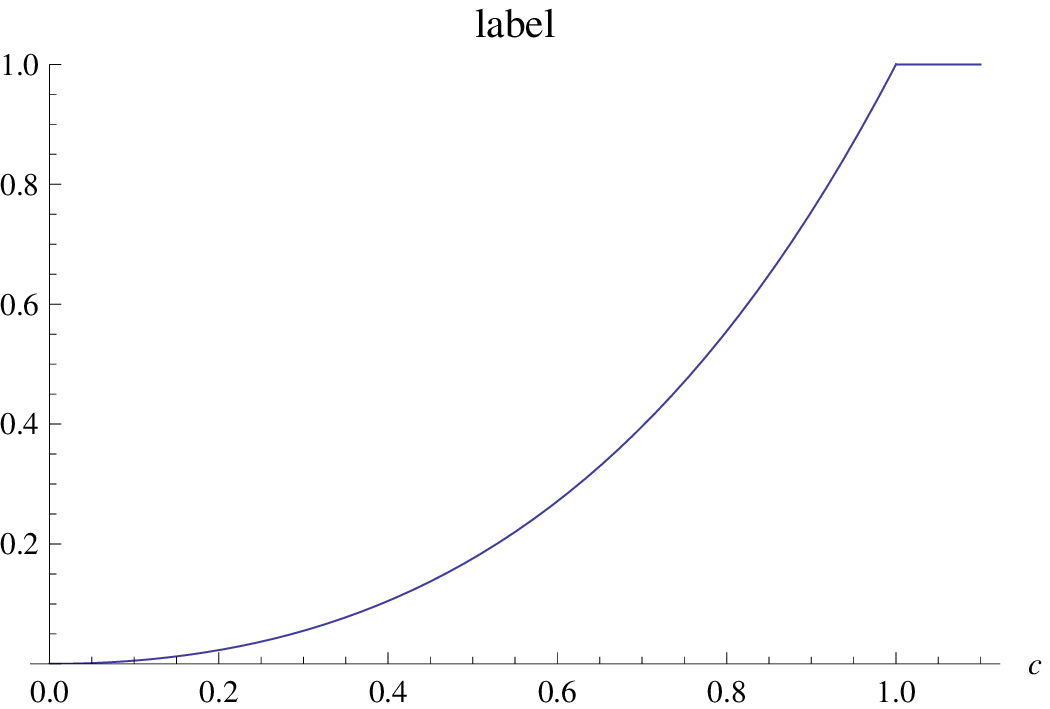} \hfill \psfrag{label}[c][c]{\footnotesize min mean-weight cycle, directed graph}\includegraphics[width=0.46\textwidth]{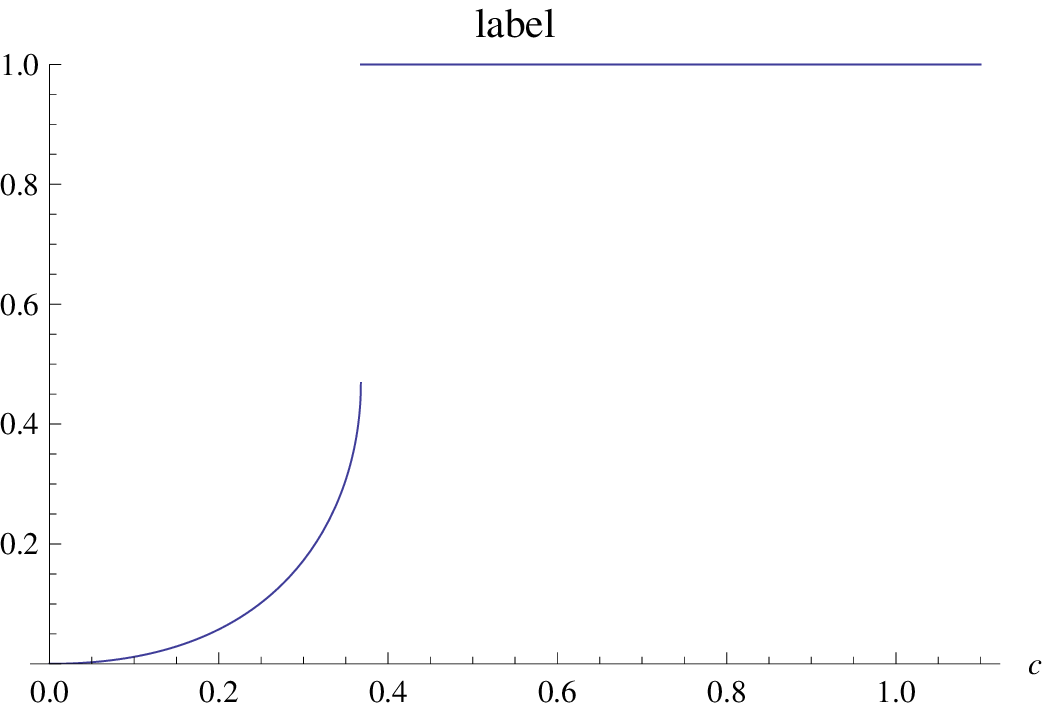}}
\caption{
The probability that the min max/mean-weight cycle in the undirected/directed complete graph has max/mean-weight less than $c/n$.
(The upper left panel was computed by Janson \cite{janson}.)  For the min max-weight cycle, the function is non-analytic at $c=1$, but is continuous.  For the min mean-weight cycle, the function is discontinuous at $c=1/e$.
 \label{cdf-4}}
\end{figure}

In the related problem of finding the maximum length path whose mean
weight is at most $c/n$, Aldous \cite{aldous:1/e} found that there is
a transition point at $c=1/e$, where for fixed $c<1/e$ the length is
$o(n)$, and for fixed $c>1/e$ the length is order $n$.  Recently Ding
\cite{ding} studied the behavior of this path length when $c$ is at or
near $1/e$, and proved that the length exhibits a transition at $c=1/e +
\Theta(1/\log^2 n)$, with unspecified constants.  By comparison, we
prove that with probability $1-o(1)$ the min mean-weight cycle has mean
weight at most $(1/e+(\pi^2+o(1))/(2 e \log^2 n))/n$, but we do not know
if the $O(1/\log^2 n)$ correction term is sharp.

Whether the complete graph is directed or undirected will affect the
length distribution and the max/mean-weight of the min max/mean-weight
cycle, but each of the qualitative behaviors discussed above is
unaffected by whether the graph is directed or undirected (as shown in
\fref{cdf-4} and \tblref{tbl:p_k}), though the exponent
characterizing the fatness of the tail of the length distribution does
change for the min max-weight cycle.

\begin{table}[t!]
$$
\begin{array}{ccccc}
 & \text{undirected graph}
 & \text{directed graph}
 & \text{undirected graph}
 & \text{directed graph}
\\
 & \text{min max cycle}
 & \text{min max cycle}
 & \text{min mean cycle}
 & \text{min mean cycle}
\\
\hline
 p_{2} & \text{---} & 0.281718 & \text{---} & 0.116616 \\
 p_{3} & 0.121608 & 0.154845 & 0.035248 & 0.061750 \\
 p_{4} & 0.084915 & 0.098900 & 0.022796 & 0.039132 \\
 p_{5} & 0.063827 & 0.068937 & 0.016229 & 0.027417 \\
 p_{6} & 0.050329 & 0.050915 & 0.012283 & 0.020485 \\
 p_{7} & 0.041047 & 0.039195 & 0.009701 & 0.016005 \\
 p_{8} & 0.034331 & 0.031129 & 0.007905 & 0.012923 \\
 p_{9} & 0.029280 & 0.025334 & 0.006598 & 0.010701 \\
 p_{10} & 0.025365 & 0.021027 & 0.005613 & 0.009039 \\
 p_{100} & 0.000921 & 0.000264 & 0.000165 & 0.000238 \\
\\
 p_{k} &
 \displaystyle\frac{1}{2}\! \int_0^1\! c^{k-1} \sqrt{1{-}c}\, e^{\frac{c}{2}+\frac{c^2}{4}} \, dc &
 \displaystyle\int_0^1\!\! c^{k-1} (1{-}c) e^c \, dc &
 \displaystyle\frac{k^k}{2 k!}\! \int_0^{\frac{1}{e}} \frac{c^{k-\frac{1}{2}} e^{\frac{c}{2}+\frac{c^2}{2}}}{\sqrt{T(c)}} \, dc &
 \displaystyle\frac{k^k}{k!}\! \int_0^{\frac{1}{e}} \frac{c^k e^c}{T(c)} \, dc \\
\\
 p_{k}
 &
 (1{+}o(\!1\!)\!)\displaystyle\frac{e^{3/4} \sqrt{\pi }}{4 k^{3/2}} &
 (1{+}o(\!1\!)\!)\displaystyle\frac{e}{k^2} &
 (1{+}o(\!1\!)\!)\displaystyle\frac{e^{-\frac{1}{2}+\frac{1}{2 e}+\frac{1}{2 e^2}}}{\sqrt{8 \pi} k^{3/2}} &
 (1{+}o(\!1\!)\!)\displaystyle\frac{e^{-1+\frac{1}{e}}}{\sqrt{2 \pi} k^{3/2} } \\
\\
 p_{k}
 &
 \displaystyle\frac{0.938071\dots{+}o(\!1\!)}{k^{3/2}} &
 \displaystyle\frac{2.71828\dots{+}o(\!1\!)}{k^2} &
 \displaystyle\frac{0.155598\dots{+}o(\!1\!)}{k^{3/2}} &
 \displaystyle\frac{0.212023\dots{+}o(\!1\!)}{k^{3/2}} \\
\\
 \sum _{k} p_{k} & 1 & 1 & 1-e^{-\frac{1}{2}+\frac{1}{2 e}+\frac{1}{2 e^2}} & 1-e^{-1+\frac{1}{e}} \\
  &  &  & =0.219946\dots & =0.468536\dots
\end{array}
$$
\caption{
  The limiting length distribution of the min max/mean-weight cycle.  The leftmost column is due to Janson \cite{janson}.  Here $T(c)= \sum_{k=1}^\infty k^{k-1} c^k/k!$ is the ``tree function''.  
For the min max-weight cycle, the length distribution is supported on finite values, while for the min mean-weight cycle, a constant fraction of the probability mass ($1-\sum_k p_k$) drifts off to infinity.  The size of the jumps at the discontinuities in Figure~\ref{cdf-4} is $1-\sum_k p_k$.
}
\label{tbl:p_k}
\end{table}

\enlargethispage{12pt}
We call a cycle \textit{$c$-light\/} if its mean weight is $<c/n$.  We
start with an elementary calculation of the expected number of
$c$-light cycles of length $k$.  Then we show that for $c\leq 1/e$,
the set of light cycles is well approximated by a Poisson process with
intensity given by the first-moment computation.  For $c>1/e$, the
number of $c$-light cycles diverges.  Given this Poisson
approximation, it is straightforward to do the computations
illustrated in \fref{cdf-4} and \tblref{tbl:p_k}.  A key difference
between the min mean-weight cycle and min max-weight cycle is that the
expected number of $c$-light cycles is finite at the critical value
$c=1/e$, while for the max-weight cycles, the expected number of light
cycles diverges at the critical value of~$c$.  As we will explain, the
finite expected number of light cycles at the critical value of $c$ is
what leads to the discontinuity in the curves in \fref{cdf-4} and it
is also why $\sum_k p_k < 1$.  With probability tending to $1-\sum_k
p_k$ the min mean-weight cycle is long (has length tending to infinity
with $n$) and has mean weight $(1/e+o(1))/n$; analyzing its length is
difficult because the Poisson approximation breaks down in this
regime, but we bound it below by $(2/\pi^2-o(1))\log^2 n \log\log n$.

\section{Review of the tree function}

Because it plays a key role in the formulas for min mean-weight
cycles in the subcritical regime, i.e., for weight $<1/(en)$, we
briefly review the {\em tree function} and the closely related Lambert
$W$ function.  The tree function $T$ is the exponential generating
function for rooted spanning trees.  Recalling Cayley's formula that
there are $k^{k-1}$ rooted spanning trees on $k$ nodes, we have
\[ T(z) = \sum_{k=1}^\infty k^{k-1} \frac{z^k}{k!}.
\]
From Stirling's formula
\[ \sqrt{2\pi k}\, \frac{k^k}{e^k} \leq k! \leq \sqrt{2\pi k}\, \frac{k^k}{e^k} e^{1/(12k)}, \]
this sum
converges when $|z|\leq 1/e$.  Using techniques from the theory of
generating functions, one can see that
\begin{equation}
 T(z) = z \,e^{T(z)} \label{eq:tree-rec}
\end{equation}
 (see e.g., \cite[Proposition~5.3.1]{stanley}).
It is straightforward to check that
$ T(1/e) = 1$.
Near this critical point, using~\eqref{eq:tree-rec}, one can deduce
\begin{equation}\label{eq:treefunctionatcritical}
 T(\textstyle\frac{1-\delta}{e}) = 1-\sqrt{2\delta} + O(\delta).
 \end{equation}
The Lambert $W$ function is defined by the equation
$$ z = W(z) \, e^{W(z)}. $$
This is a multivalued function, but the principal branch is defined so that
$W(z) = -T(-z)$ when $|z|\leq 1/e$, and by analytic continuation elsewhere.
The tree function figures prominently in the analysis of random graphs
near the phase transition (see e.g., \cite{JKLP}), and the Lambert $W$
function is an important function in applied mathematics; for further
background see \cite{MR1414285}.

\section{The expected number of light cycles}

Given $c>0$, say that a directed or undirected $k$-cycle~$C$ or $k$-path $P$ is
\textit{$c$-light\/} if its mean weight $w(C)/k$ is at most $c/n$.

\begin{lemma} \label{lemma:prelim}
With exponential edge weights,
if $0\leq c_1\leq c_2$ then
$$\Pr[\text{$k$-cycle or $k$-path is $c_2$-light but not $c_1$-light}]
\begin{cases}
\displaystyle\sim \frac{k^k}{k!}\frac{c_2^k-c_1^k}{n^k} & \text{if $k=o(n)$}\\[9pt]
\displaystyle \leq \frac{k^k}{k!}\frac{c_2^k-c_1^k}{n^k}
& \text{for any $k$.}
\end{cases}$$
\end{lemma}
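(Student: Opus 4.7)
\medskip
\noindent\textbf{Proof plan.}
The plan is to reduce the statement to a direct calculation using the fact that, for a fixed $k$-cycle or $k$-path, its total weight $W := w(C)$ is the sum of $k$ independent Exponential$(1)$ random variables. First I would invoke the standard fact that such a sum has the Gamma$(k,1)$ distribution, with density $f(s) = s^{k-1} e^{-s}/(k-1)!$ for $s \geq 0$. The event ``$c_2$-light but not $c_1$-light'' translates to $c_1 k/n < W \leq c_2 k/n$ (with possibly degenerate endpoints when $c_1 = 0$), so
\[
\Pr[\text{$c_2$-light but not $c_1$-light}]
= \int_{c_1 k/n}^{c_2 k/n} \frac{s^{k-1}\, e^{-s}}{(k-1)!}\, ds.
\]

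For the unconditional upper bound, I would use the trivial bound $e^{-s} \leq 1$ on the integrand and compute the resulting elementary integral:
\[
\int_{c_1 k/n}^{c_2 k/n} \frac{s^{k-1}}{(k-1)!}\, ds
= \frac{(c_2 k/n)^k - (c_1 k/n)^k}{k!}
= \frac{k^k}{k!}\,\frac{c_2^k - c_1^k}{n^k}.
\]
This is valid for every $k$, giving the second line of the lemma.

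For the asymptotic statement when $k = o(n)$, the key observation is that throughout the integration range we have $s \leq c_2 k/n = o(1)$, so $e^{-s} = 1 - O(k/n)$ uniformly in $s$. Pulling this factor out of the integral yields
\[
\int_{c_1 k/n}^{c_2 k/n} \frac{s^{k-1}\, e^{-s}}{(k-1)!}\, ds
= (1 + O(k/n))\,\frac{k^k}{k!}\,\frac{c_2^k - c_1^k}{n^k},
\]
which gives the desired asymptotic equivalence. There is no real obstacle here; the only thing to watch is that the error factor $e^{-s}$ is uniformly close to $1$ on the interval of integration, which is where the hypothesis $k = o(n)$ enters. The statement is identical for cycles and for paths because in each case the weight is a sum of $k$ independent Exponential$(1)$ variables, and the argument never uses anything beyond this.
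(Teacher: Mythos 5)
Your proof is correct and follows essentially the same route as the paper: identify the weight as a Gamma$(k,1)$ random variable, write the probability as the integral of the density over $(c_1 k/n, c_2 k/n]$, bound $e^{-s}\le 1$ for the unconditional inequality, and observe that $e^{-s}=1-o(1)$ uniformly on the interval when $k=o(n)$ to get the asymptotic. The only difference is that you spell out the uniformity of the $e^{-s}$ estimate slightly more explicitly than the paper does.
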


\begin{proof}
The weight $w(C)$ of a $k$-cycle or $k$-path $C$ is distributed as
the sum of $k$ independent exponential random variables, that is,
according to the Gamma distribution with shape parameter $k$,
which has density function
\begin{equation}\label{eq:density}
\phi: x\mapsto e^{-x} {x^{k-1}}/{\Gamma(k)},
 \end{equation}
where $\Gamma$ is the gamma function, which is
$\Gamma(k)=(k-1)!$ for positive integers $k$.  Thus
$\Pr[c_1 k/n < w(C)\leq c_2 k/n]=\int_{c_1 k/n}^{c_2 k/n}\phi(x)\,dx.$
Now $e^{-k/n}\leq 1$ and when $k=o(n)$ we have $e^{-k/n}=1-o(1)$.
\end{proof}

\begin{lemma}\label{lemma:prelim2}
Let $N_k$ denote the number of directed $k$-cycles in the complete graph on $n$ vertices. For $k\geq2$ we have
$$N_k \begin{cases}
\displaystyle\sim \frac{n^k}{k} & \text{if $k=o(\sqrt{n})$}\\[9pt]
\displaystyle\leq \frac{n^k}{k} &\text{for any $k$.}
\end{cases} $$
(For $k\geq 3$, the number of undirected $k$-cycles is of course $\frac12 N_k$.)
\end{lemma}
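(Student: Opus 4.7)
The plan is to count $N_k$ exactly and then extract both the clean upper bound and the asymptotic from the resulting formula.

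First, I would count directed $k$-cycles in the complete graph on $n$ labeled vertices by choosing the vertex set and then the cyclic order. There are $\binom{n}{k}$ choices for the $k$-element vertex set, and the number of distinct directed cyclic orderings on $k$ labeled elements is $(k-1)!$ (the $k!$ linear orderings identified under the $k$-element cyclic rotation group). This gives
\[
N_k \;=\; \binom{n}{k}(k-1)! \;=\; \frac{n!}{k\,(n-k)!} \;=\; \frac{1}{k}\prod_{i=0}^{k-1}(n-i).
\]

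Second, the upper bound is immediate: each of the $k$ factors satisfies $n-i\leq n$, so $N_k \leq n^k/k$, valid for all $k\geq 2$ (and for $k>n$ the product vanishes, so the bound holds trivially).

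Third, for the asymptotic statement when $k=o(\sqrt{n})$, I would write
\[
\frac{N_k}{n^k/k} \;=\; \prod_{i=0}^{k-1}\Bigl(1-\frac{i}{n}\Bigr) \;=\; \exp\!\left(\sum_{i=0}^{k-1}\log\!\Bigl(1-\frac{i}{n}\Bigr)\right),
\]
and expand each logarithm as $-i/n + O(i^2/n^2)$, which is justified because $(k-1)/n = o(1/\sqrt{n})$. The sum inside the exponential is then $-k(k-1)/(2n) + O(k^3/n^2)$, and both error terms are $o(1)$ precisely under the hypothesis $k=o(\sqrt{n})$. Hence the ratio tends to $1$, giving $N_k\sim n^k/k$.

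There is no real obstacle here; the only thing to watch is the identification of the correct symmetry group when passing from linear to cyclic orderings (dividing by $k$, not $2k$, since we are counting directed cycles — the undirected count noted parenthetically is obtained by the additional factor of $1/2$ for $k\geq 3$, where reversing the direction gives a distinct cyclic sequence but the same undirected cycle).
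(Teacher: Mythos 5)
Your proof is correct and matches the paper's argument essentially line for line: both count $N_k = \frac{1}{k}\prod_{i=0}^{k-1}(n-i)$, read off the upper bound from $n-i\leq n$, and obtain the asymptotic by exponentiating $\sum\log(1-i/n)$ and checking the error terms vanish when $k=o(\sqrt{n})$.
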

\begin{proof}
$$\frac{kN_k}{n^k}=\frac{n(n-1)\cdots (n-k+1)}{n^k}=
  \prod_{i=0}^{k-1}\left(1-\frac{i}{n}\right)=\exp\left[-\frac{k^2}{2n} + O\left(\frac{k}{n}\right) +O\left(\frac{k^3}{n^2}\right)+\cdots \right]$$
which is $1-o(1)$  when $k=o(\sqrt{n})$, and at most $1$ in all cases.
\end{proof}

\begin{theorem}\label{thm:EZ}
  For the directed complete graph, let $Z^{(k)}_c$ denote the number of directed $k$-cycles with mean
  weight less than $c/n$, and $Z_c=\sum_k Z^{(k)}_c$ denote the total
  number of $c$-light directed cycles.  If $0\leq c_1\leq c_2$ then
\begin{equation}\label{eq:zk}
 \E[Z^{(k)}_{c_2}-Z^{(k)}_{c_1}] =
\begin{cases}
\displaystyle \sim \frac{(c_2^k-c_1^k) k^k}{k\times k!} & \text{if $k=o(\sqrt{n})$}\\[9pt]
\displaystyle \leq \frac{(c_2^k-c_1^k) k^k}{k\times k!} & \text{for any $k$,}
\end{cases}
\end{equation}
and
\begin{equation}
\lim_{n\rightarrow\infty}\E[Z_c]=
\begin{cases}
\displaystyle\sum_{2\leq k<\infty} \frac{(c k)^k}{k\times k!}=T(c)-c&\text{for $c\leq 1/e$}, \\[2pt]
\infty &\text{for fixed $c>1/e$}.
\end{cases}
\label{limEZ}
\end{equation}
For the undirected complete graph, the expected number of undirected $c$-light $k$-cycles is $\frac12\E[Z^{(k)}_c]$ for $k\geq 3$, and the total expected number of $c$-light cycles is
\begin{equation}
\lim_{n\rightarrow\infty}\E[\text{\# undirected $c$-light cycles}]=
\begin{cases}
\displaystyle \frac{T(c)-c-c^2}{2}&\text{for $c\leq 1/e$}, \\
\infty &\text{for fixed $c>1/e$}.
\end{cases}
\label{limEZ-undirected}
\end{equation}
\end{theorem}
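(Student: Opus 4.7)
The plan is to combine the two preceding lemmas termwise and then sum over $k$, recognizing the tree function.

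First I would write, by linearity of expectation,
\[
\E[Z^{(k)}_{c_2}-Z^{(k)}_{c_1}] = N_k \cdot \Pr[\text{a fixed directed $k$-cycle is $c_2$-light but not $c_1$-light}].
\]
\lref{lemma:prelim2} gives $N_k \le n^k/k$ with asymptotic equality when $k=o(\sqrt n)$, and \lref{lemma:prelim} gives the probability above is $\le (k^k/k!)(c_2^k-c_1^k)/n^k$ with asymptotic equality when $k=o(n)$. Multiplying, the factors of $n^k$ cancel, yielding \eqref{eq:zk} (the asymptotic regime of the product is controlled by the stricter hypothesis $k=o(\sqrt n)$).

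Next I would identify the sum. Setting $c_1=0$ and $c_2=c$, the per-$k$ upper bound becomes $(ck)^k/(k \cdot k!) = c^k k^{k-1}/k!$, so summing over $k\ge 2$ the upper bound equals $T(c)-c$. By Stirling, $c^k k^{k-1}/k! \sim (ce)^k/(\sqrt{2\pi}\, k^{3/2})$, which is summable precisely when $c\le 1/e$ and divergent for $c>1/e$. For $c\le 1/e$ the termwise asymptotic from \eqref{eq:zk} applies to every fixed $k$, and the universal bound $\E[Z^{(k)}_c]\le c^k k^{k-1}/k!$ provides a summable dominating series independent of $n$, so dominated convergence gives
\[
\lim_{n\to\infty}\E[Z_c] = \sum_{k\ge 2} \lim_{n\to\infty}\E[Z^{(k)}_c] = \sum_{k\ge 2}\frac{c^k k^{k-1}}{k!} = T(c)-c.
\]
For fixed $c>1/e$, I would instead lower-bound $\E[Z_c]\ge \sum_{k=2}^{K}\E[Z^{(k)}_c]$; for each fixed $K$ this tends to $\sum_{k=2}^{K} c^k k^{k-1}/k!$, and letting $K\to\infty$ (after $n\to\infty$) gives $+\infty$ since the series diverges.

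Finally, for the undirected case I would note that for $k\ge 3$ each undirected $k$-cycle corresponds to exactly two directed $k$-cycles with the same total weight, so the expected count of undirected $c$-light $k$-cycles is $\tfrac12 \E[Z^{(k)}_c]$; there are no undirected $2$-cycles. Subtracting the $k=2$ term, whose asymptotic contribution is $(2c)^2/(2\cdot 2)=c^2$, and halving yields $(T(c)-c-c^2)/2$, proving \eqref{limEZ-undirected}. The only place that needs real care is the interchange of limit and infinite sum; everything else is arithmetic on the bounds from \lref{lemma:prelim} and \lref{lemma:prelim2}, so this dominated-convergence step is the main (though mild) obstacle.
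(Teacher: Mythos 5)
Your proof is correct and follows essentially the same route as the paper's: combine Lemmas~\ref{lemma:prelim} and~\ref{lemma:prelim2} termwise to get~\eqref{eq:zk}, then pass to the limit in the sum over $k$, with the undirected case reduced to the directed one. The paper handles the interchange of limit and sum by a direct squeeze (upper bound $\E[Z_c]\leq T(c)-c$, lower bound by truncated partial sums, then $k_0\to\infty$), which carries the same content as your dominated-convergence step.
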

\begin{proof}
Equation~\eqref{eq:zk} is immediate from Lemmas~\ref{lemma:prelim} and~\ref{lemma:prelim2}.
For large $k$, by Stirling's formula 
the expression in~\eqref{eq:zk} is asymptotic to $$\frac{(c_2^k-c_1^k) e^k}{\sqrt{2\pi} k^{3/2}}.$$  Thus for fixed $c>1/e$, the expected number $Z_c$ of $c$-light cycles tends to $\infty$.  For $c\leq 1/e$, note that $\E[Z_c]\leq T(c)-c$, and that $\E[Z_c]\geq \sum_{k\leq k_0}\E[Z^{(k)}_c]$,
whose summands converge
to the $k\leq k_0$ terms for the series for $T(c)-c$.  Taking the $k_0\to\infty$ limit then yields~\eqref{limEZ}.
The formulas for the undirected complete graph follow immediately from the formulas for the directed complete graph.
\end{proof}

\section{Poisson approximation for short light cycles}

Next we show that the short $c$-light cycles are well approximated by
a Poisson process.  Here ``short'' means length at most
$L_0 = {\log n}/{(2\log\log n)},$
though for our main results it would suffice to prove this Poisson
approximation for any $L_0$ which tends to infinity as $n\to\infty$.
For $c\leq 1/e$, we know from the first-moment bounds in \tref{thm:EZ}
that with high probability there are no cycles of length $\omega(1)$.
It will then follow that the set of all $c$-light cycles is well
approximated by a Poisson process when $c\leq 1/e$.

For our purposes, the most convenient method to show Poisson
approximation is the Chen--Stein method, as formulated by Arratia,
Goldstein, and Gordon \cite[Theorem~2]{AGG}:
\begin{theorem}[\cite{AGG}] \label{thm:AGG}
  Let $\{ X_\alpha :\alpha\in I \}$ be a finite set of indicator random
  variables of dependent events, and let $\{Y_\alpha:\alpha\in I\}$ be a set of
  mutually independent Poisson random variables such that
  $\E[Y_\alpha]=\E[X_\alpha]$ for each $\alpha$.
  For each $\alpha$ let $B_{\alpha}$ be a subset of $I$, which is
  interpreted as the ``neighborhood of $\alpha$''.
Then the total variation distance between the dependent Bernoulli process
$(X_\alpha)_{\alpha\in I}$ and the independent Poisson process $(Y_\alpha)_{\alpha\in I}$ is at most
\begin{equation}
  2 \sum_{\alpha\in I}\,\sum_{\beta\in B_\alpha} \E[X_\alpha] \E[X_{\beta}]
 + 2 \sum_{\alpha\in I}\,\sum_{\substack{\beta\in B_\alpha\\ \beta\neq \alpha}} \E[X_\alpha X_{\beta}]
 + \sum_{\alpha\in I} \E\Big[\big|\E[X_\alpha | \{X_{\beta}: \beta\notin B_\alpha\}]-\E[X_\alpha]\big|\Big]
.
\label{Poisson-error}
\end{equation}
\end{theorem}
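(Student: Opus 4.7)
The statement is Theorem~2 of Arratia, Goldstein, and Gordon~\cite{AGG}, so the direct route is to invoke their result; if I were reconstructing the argument, I would use the Chen--Stein method for Poisson process approximation. Setting $\lambda_\alpha = \E[X_\alpha]$ and defining the generator
\[ \mathcal{L}g(y) \;:=\; \sum_\alpha \lambda_\alpha\bigl[g(y+e_\alpha)-g(y)\bigr] \;+\; \sum_\alpha y_\alpha\bigl[g(y-e_\alpha)-g(y)\bigr], \]
Stein's characterization says that the independent Poisson process $(Y_\alpha)$ is the unique law on $\mathbb{Z}_+^I$ with $\E[\mathcal{L}g(Y)]=0$ for all bounded $g$. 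Since the process-level total variation distance is the supremum of $|\Pr[X\in A]-\Pr[Y\in A]|$ over $A\subseteq\mathbb{Z}_+^I$, I would, for each such $A$, solve the Stein equation $\mathcal{L}g = \mathbf{1}_A-\Pr[Y\in A]$, obtaining a solution $g=g_A$ satisfying the standard smoothing estimates $\|g\|_\infty\le 1$ and $\sup_y|g(y+e_\alpha)-g(y)|\le 1$.

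Plugging the dependent process $X$ into the Stein equation yields
\[ \Pr[X\in A]-\Pr[Y\in A] \;=\; \E[\mathcal{L}g(X)] \;=\; \sum_\alpha\Bigl(\lambda_\alpha\,\E[g(X+e_\alpha)]\;-\;\E[X_\alpha\,g(X)]\Bigr). \]
The decisive manipulation splits $X$ into its coordinates on $B_\alpha$ and on $B_\alpha^c$, writing $X = X_{B_\alpha}+X_{B_\alpha^c}$, and introduces $\lambda_\alpha\,\E[g(X_{B_\alpha^c}+e_\alpha)]$ and $\E[X_\alpha\,g(X_{B_\alpha^c}+e_\alpha)]$ as comparison terms. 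Telescoping the effect of restoring the coordinates of $X_{B_\alpha}$ one at a time, and using the uniform bound on the first differences of $g$, one controls the local error by $\lambda_\alpha\lambda_\beta$ and $\E[X_\alpha X_\beta]$ summed over $\beta\in B_\alpha$, producing the first two sums in~\eqref{Poisson-error}. The remaining residual equals $\E\bigl[(\lambda_\alpha-\E[X_\alpha\mid X_{B_\alpha^c}])\,g(X_{B_\alpha^c}+e_\alpha)\bigr]$, and by $\|g\|_\infty\le 1$ is bounded by $\E\bigl|\E[X_\alpha\mid X_{B_\alpha^c}]-\lambda_\alpha\bigr|$, yielding the third sum. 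The factor of $2$ on the first two sums reflects the crude uniform estimates on $g$ and its increments; sharper $O(1/\lambda)$-type constants become available when the total mean $\sum_\alpha\lambda_\alpha$ is large, but they are not needed here.

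The principal obstacle is the clean bookkeeping in that decomposition: the ``missing $X_\alpha$-increment'' is what produces the $\beta\neq\alpha$ restriction in the second sum, and one has to verify that the far residual really depends only on $X_{B_\alpha^c}$ before pulling out the conditional expectation. Once the smoothing bounds on $g_A$ are granted, the rest is routine. In the subsequent application I expect $I$ to index the short directed cycles and $B_\alpha$ to consist of the cycles sharing at least one arc with $\alpha$; at that point each of the three sums in~\eqref{Poisson-error} should reduce to an explicit first-moment computation controlled by \tref{thm:EZ}.
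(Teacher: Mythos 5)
The paper does not prove this theorem; it states it and cites it as Theorem~2 of Arratia, Goldstein, and Gordon~\cite{AGG}, which is exactly the route you propose. Your reconstruction of the Chen--Stein argument behind the cited result is a reasonable summary of the approach in that reference, but it is extra material: for purposes of this paper, invoking \cite{AGG} is all that is required.
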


With a suitable choice of the neighborhood sets $B_\alpha$, the third
term above can easily be made zero, and analyzing the first two terms
above is manageable.
\newpage
\enlargethispage{10pt}
\begin{theorem} \label{short-Poisson}
  Suppose $c_0$ is fixed and $L_0= \log n /(2 \log\log n)$.
  For both the directed and undirected complete graphs with exponential edge weights,
  for any $\eps$, for
  sufficiently large $n$, the collection of $c_0$-light cycles with
  length at most $L_0$ is within total variation distance~$\eps$
  of a Poisson process whose intensity is the expected number of such cycles.
  In particular, except with probability~$\eps$, for all $k\leq L_0$ and all $c\leq c_0$,
   the number of $c$-light cycles of length $k$ equals the number of points
   in the corresponding region of the Poisson process.
\end{theorem}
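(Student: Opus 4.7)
The strategy is to apply the Chen--Stein method of \tref{thm:AGG} to the indicators $X_\alpha = \mathbf{1}[\alpha \text{ is } c_0\text{-light}]$, where $\alpha$ ranges over all (directed or undirected) cycles in $K_n$ of length $2 \leq k \leq L_0$. I take the neighborhood $B_\alpha$ to consist of $\alpha$ itself together with every cycle $\beta$ sharing at least one edge with~$\alpha$. Since the exponential edge weights are independent, $X_\alpha$ depends only on the $k$ edges of $\alpha$, and so it is independent of the entire collection $\{X_\beta:\beta\notin B_\alpha\}$; hence the third term in the Chen--Stein bound~\eqref{Poisson-error} is identically zero.

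For the first term $b_1=\sum_\alpha\sum_{\beta\in B_\alpha}\E[X_\alpha]\E[X_\beta]$, I use the elementary fact that for a fixed $k$-cycle $\alpha$ there are at most $k\,n^{j-2}$ cycles of length $j$ sharing an edge with it. Combined with the first-moment bound $\E[X_\alpha]\le (c_0 k/n)^k/k!$ from \lref{lemma:prelim} and the cycle count from \lref{lemma:prelim2}, this yields
$$b_1\ \le\ \frac{1}{n^2}\left(\sum_{k=2}^{L_0}\frac{(c_0 k)^k}{k!}\right)^{\!\!2}=O\!\left(\frac{(c_0 e)^{2L_0}}{n^2}\right)=o(1),$$
because $L_0=\log n/(2\log\log n)$ gives $(c_0 e)^{L_0}=n^{O(1/\log\log n)}=n^{o(1)}$ for any fixed~$c_0$.

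The main work is in the second term $b_2=\sum_\alpha\sum_{\beta\in B_\alpha,\,\beta\ne\alpha}\E[X_\alpha X_\beta]$. For cycles $\alpha,\beta$ of lengths $k,j$ whose shared edges form $r\ge 1$ disjoint paths totalling $s$ edges, decompose the weights as $S=w(\alpha\cap\beta)$, $A=w(\alpha\setminus\beta)$, $B=w(\beta\setminus\alpha)$, which are independent $\mathrm{Gamma}(s)$, $\mathrm{Gamma}(k-s)$, $\mathrm{Gamma}(j-s)$ variables. Conditioning on $S=x$ and applying $\Pr[A\le t]\le t^{k-s}/(k-s)!$ gives
$$\E[X_\alpha X_\beta]\ \le\ \int_0^{c_0\min(k,j)/n} \frac{(c_0 k/n-x)^{k-s}}{(k-s)!}\cdot\frac{(c_0 j/n-x)^{j-s}}{(j-s)!}\cdot\frac{x^{s-1}}{(s-1)!}\,dx.$$
Since the vertex set of $\alpha\cup\beta$ has exactly $k+j-s-r$ elements, the number of labeled pairs with parameters $(k,j,s,r)$ is at most $n^{k+j-s-r}$ times a combinatorial factor polynomial in~$k,j$. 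The decisive $n^{-r}\le n^{-1}$ saving comes precisely from the requirement $r\ge 1$ (at least one shared edge). Summing, with Stirling, over the admissible range $1\le r\le s\le\min(k,j)-1$ and $2\le k,j\le L_0$, the $(c_0 e)^{k+j-s}$ factors are absorbed by the vertex-count savings, yielding $b_2=n^{-1+o(1)}=o(1)$.

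The main obstacle is this bound on~$b_2$: the joint probability is only mildly smaller than the product of marginals, so the analysis must simultaneously exploit the $n^{-r}$ vertex-set saving and the structural constraint that $\alpha\cap\beta$ is a proper sub-forest of each cycle (forcing $s\le\min(k,j)-1$), especially in the regime $c_0>1/e$ where the individual first moments grow with~$n$. Once \tref{thm:AGG} delivers Poisson approximation for the Bernoulli process $\{X_\alpha\}$, the richer statement about $c$-light counts for all $c\le c_0$ follows by coupling: conditional on $X_\alpha=1$ the weight $w(\alpha)$ has the same truncated-Gamma distribution in the true process and in any Poisson approximation, so attaching these weights as marks to both processes preserves the total variation distance and gives the claim uniformly in $c\le c_0$ and $k\le L_0$.
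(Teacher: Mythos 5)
You take the same Chen--Stein route as the paper: the same neighborhoods $B_\alpha$ (cycles sharing an edge), the same vanishing of the third error term, and essentially the same estimates for the first two, with your $n^{-r}$ saving from the number of shared subpaths matching the paper's $n^{-i}$ count. The substantive difference is how the uniformity over all thresholds $c\le c_0$ is obtained, and there your argument has a gap.

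The paper partitions $(0,c_0]$ into width-$\Delta$ bins, applies Chen--Stein to the \emph{refined} indicators $X_{C,c}=\mathbf{1}[\text{$C$ is $c$-light but not $(c-\Delta)$-light}]$, and observes that the first two error terms are unchanged by the refinement because $B_{C,c}$ includes all pairs $(C',c')$ with $C'$ overlapping $C$, regardless of $c'$. This yields the Poisson-process statement for the full ``cycle $\times$ weight'' configuration in one step. You instead apply Chen--Stein only to the coarse indicators $X_\alpha=\mathbf{1}[\alpha\text{ is }c_0\text{-light}]$ and then assert that ``attaching these weights as marks to both processes preserves the total variation distance.'' That is not automatic. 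Marking preserves TV only when the marks are drawn independently of the process being marked (data processing); here, in the true process, the weight $w(\alpha)$ of a light cycle is correlated with the indicators $X_\beta$ for overlapping $\beta$ and, jointly across light cycles, the weights are correlated with one another through shared edges and through the event that all other cycles fail to be light. In the Poisson comparison process the truncated-Gamma marks are independent. So the TV bound on the unmarked indicator process does not transfer to the marked one without an additional argument --- e.g.\ that, conditional on the set of light cycles, the joint weight law is close in TV to a product of truncated Gammas. You do not supply that argument; the cleanest fix is to run Chen--Stein on the discretized index set, exactly as the paper does, at which point the ``for all $c\le c_0$'' statement comes for free.

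Two smaller notes. First, your count of $j$-cycles through a fixed edge should carry an extra factor of order $j$ (there are $\sim \tfrac{j}{2}n^{j-2}$ undirected $j$-cycles through a given edge), but this only changes the polynomial prefactor and does not affect $b_1=o(1)$. Second, the restriction $s\le\min(k,j)-1$ that you flag as ``decisive'' is needed to keep the Gamma shape parameters positive, but the real engine is the $n^{-r}$ factor with $r\ge1$, which you do identify correctly.
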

\begin{proof}
  We divide the interval $(0,c_0]$ into subintervals of length
  $\Delta$.  To apply \tref{thm:AGG}, let $I$ denote the set of pairs
  $(C,c)$, where $C$ is a $k$-cycle (directed or undirected) with $k\leq L_0$, and
  $(c-\Delta,c]$ is one of the subintervals.  Let $X_{C,c}$ be the
  indicator random variable for cycle~$C$ being $c$-light (i.e., a
  cycle with mean weight at most $c/n$), but not $(c-\Delta)$-light.

  Let $B_{C,c}$ denote the subset of pairs $(C',c')\in I$ for which
  cycles $C$ and $C'$ have at least one edge in common.
  The variables $\{X_{C',c'}:
  (C',c')\notin B_{C,c}\}$ only depend on edges that are disjoint from
  cycle~$C$, so conditioning on them has no effect on the weight of
  cycle~$C$.  Thus
$$\E[X_{C,c} | \{X_{C',c'}: (C',c')\notin B_{C,c}\}]-\E[X_{C,c}]=0,$$
  and so the third term in \eqref{Poisson-error} is zero.

  Next we observe that the first two terms of \eqref{Poisson-error}
  are unaffected by the subdivision of the interval $(0,c_0]$: We can
  define $X_C=\sum_{c} X_{C,c}$, where the sum is over the right
  endpoints of the intervals in the subdivision of $(0,c_0]$, which
  are still Bernoulli random variables, and define $B_C$ to be the set
  of cycles that have at least one edge in common with $C$.  Then
  $\sum_C \sum_{C'\in B_C} \E[X_C]\E[X_{C'}] = \sum_{C,c}
  \sum_{(C',c')\in B_{C,c}} \E[X_{C,c}]\E[X_{C',c'}]$, and similarly for
  the second term.  Therefore we work with the $X_C$'s and $B_C$'s.

For the first term of \eqref{Poisson-error}, we write:
$$\sum_C\sum_{C'\in B_C} \E[X_C]\E[X_{C'}]=
  \sum_{k\leq L_0}\sum_{\ell \leq L_0}
  q_k q_{\ell}N_k N_{\ell} \Pr[\text{$k$-cycle and $\ell$-cycle share an edge}],
$$
where $q_k$ is the probability that a $k$-cycle is $c_0$-light.
The expected number of edge intersections between a $k$-cycle and an
$\ell$-cycle is $k \ell/n^2$ for the directed case, and $k\ell/\binom{n}{2}$ for the undirected case,
so they intersect with probability at
most $k \ell/\binom{n}{2}$.  Thus the first term is at most
$$ \frac{4}{n(n-1)} \left[\sum_{k\leq L_0} q_k N_k k\right]^2.$$
But from \tref{thm:EZ},
$q_k N_k \leq (c_0 e)^k/\sqrt{2\pi k^3}$.
So the first term of \eqref{Poisson-error} is bounded by
$$O\left(\frac{(c_0 e)^{2 L_0}}{n^2}\right)=\frac{1}{n^{2-o(1)}},$$
which tends to $0$ as $n\to\infty$.

For the second term of \eqref{Poisson-error},
we consider all possible pairs of distinct
non-edge-disjoint cycles $C,C'$ of $I$.  Let $k$ be the number of
edges common to $C$ and to $C'$, $k+\ell$ be the length of~$C$ and
$k+m$ be the length of~$C'$.  We let $w/n$ denote the total weight of
the edges $C$ and $C'$ share, $v/n$ denote the total weight of edges
in $C$ but not $C'$, and $x/n$ denote the total weight of edges in
$C'$ but not $C$.  The probability that both cycles are $c_0$-light is:
$$\E[X_C X_{C'}] =\iiint\limits_{\substack{w+v<(k+\ell) c_0\\w+x<(k+m)c_0}}
\frac{e^{-w/n} {(w/n)^{k-1}}}{{\Gamma(k)}}
\frac{e^{-v/n} {(v/n)^{\ell -1}}}{\Gamma(\ell)}
\frac{e^{-x/n} {(x/n)^{m-1}}}{\Gamma(m)}\frac{dw\,dv\,dx}{n^3}.
$$
We can bound the $e^{-w/n}$, $e^{-v/n}$, and $e^{-x/n}$ terms by $1$:
\begin{equation*}
\E[X_C X_{C'}] \leq \frac{1}{n^{k+\ell+m}}
\iiint\limits_{\substack{w+v<(k+\ell) c_0\\w+x<(k+m)c_0}}
\frac{w^{k-1}}{\Gamma(k)}
\frac{v^{\ell -1}}{\Gamma(\ell)}
\frac{x^{m-1}}{\Gamma(m)}\,dw\,dv\,dx.
\end{equation*}
We enlarge the domain of integration to the set of $(w,v,x)$ for which
$w<(k+\ell)c_0$, $v<(k+\ell)c_0$, and $x<(k+m)c_0$,
so that the triple integral has a product form that can be evaluated explicitly:
\begin{align*}
\E[X_C X_{C'}]
&\leq \frac{1}{n^{k+\ell+m}} \frac{((k+\ell)c_0)^{k}}{k!} \frac{((k+\ell)c_0)^{\ell}}{\ell!} \frac{((k+m)c_0)^{m}}{m!} \\
&\leq \frac{c_0^{k+\ell+m}}{n^{k+\ell+m}} (k+\ell)^{k+\ell} (k+m)^m \\
&\leq \frac{(c_0 L_0)^{2 L_0}}{n^{k+\ell+m}}.
\end{align*}

We now count the number of cycle pairs $(C,C')$ which are distinct and have at least one edge in common given $k,\ell,m$.

Suppose $C'\setminus C$ consists of $i\geq 1$ paths.  
There are at most $L_0^i$ possibilities for the lengths $m_1,m_2,\dots,m_i$ of the paths of $C'\setminus C$. 
With those lengths specified, we can list the $k+\ell$ vertices of $C$ in order from some arbitrary starting point, 
specify where along $C$ each path of $C'\setminus C$ starts and ends, and specify the $m_j-1$ vertices of each path. 
Thus, for either the directed or undirected setting, the number of such configurations is at most $n^{k+\ell}  L_0^i  L_0^{2i}  n^{m-i}$.

\enlargethispage{11pt}
Altogether the number of overlapping cycles $(C,C')$ is bounded by
$$\sum_{i=1}^{L_0} \frac{L_0^{3i}}{n^i}n^{k+\ell+m} \leq 2 \frac{L_0^3}{n} n^{k+\ell+m},$$
provided $L_0\leq\sqrt[3]{n/2}$.  There are at most $L_0$ choices for each of $k$, $\ell$, and $m$.  
The second term of \eqref{Poisson-error} is then bounded by $ 4 c_0^{2L_0} L_0^{2 L_0 + 6}/{n}$ (provided $L_0\leq\sqrt[3]{n/2}$).
When $L_0 = \frac12 \log n / \log\log n$, we have
\begin{align*}
\frac{4}{n} c_0^{2 L_0} L_0^{2 L_0 + 6} &\leq
\frac{4}{n} \exp[(\log\log n - \log\log\log n)(\log n / \log\log n + 6)+\log c_0 \log n /\log\log n] \\
&= 4 \exp[6 (\log\log n - \log\log\log n)\\
&\phantom{= 4 \exp[} - \log n \log\log\log n/ \log\log n + \log c_0 \log n /\log\log n],
\end{align*}
which tends to $0$ as $n\to\infty$.
\end{proof}

\section{Below the critical point: short light cycles}

Given the Poisson approximation result in \tref{short-Poisson} and the first-moment estimate in \tref{thm:EZ}, it is straightforward to derive the formulas for the mean-weight of the min mean-weight cycle (shown in \fref{cdf-4}), and the probability that the length of the cycle is $k$ for any fixed $k$ (in \tblref{tbl:p_k}).  Similar computations were done by Janson \cite{janson} for the min max-weight cycle.

\subsection{Weight of the cycle}

\begin{theorem} \label{thm:c}
For the directed complete graph, for fixed $c$,
there is a cycle with mean weight $\leq c/n$ with probability
$$
\lim_{n\to\infty} \Pr[\text{$\exists$ cycle with mean-weight $\leq c/n$}] =
\begin{cases} 1-\exp[-T(c)+c] & c\leq 1/e \\ 1 & c>1/e, \end{cases}$$
while for the undirected complete graph the probability is
$$
\lim_{n\to\infty} \Pr[\text{$\exists$ cycle with mean-weight $\leq c/n$}] =
\begin{cases} 1-\exp[(-T(c)+c+c^2)/2] & c\leq 1/e \\ 1 & c>1/e. \end{cases}$$
\end{theorem}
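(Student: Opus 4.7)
The plan is to combine the Poisson approximation for short $c$-light cycles (\tref{short-Poisson}) with the first-moment estimate (\tref{thm:EZ}), treating the subcritical regime $c\leq 1/e$ and the supercritical regime $c>1/e$ separately.

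For $c\leq 1/e$, I would first show that w.h.p.\ no $c$-light cycle is ``long'', meaning of length exceeding $L_0=\log n/(2\log\log n)$. The uniform bound $\E[Z^{(k)}_c]\leq c^k k^{k-1}/k!\leq(ce)^k/\sqrt{2\pi k^3}$ from \tref{thm:EZ} and Stirling lets Markov bound the expected number of long $c$-light cycles by $\sum_{k>L_0}(ce)^k/\sqrt{2\pi k^3}$; since $ce\leq 1$, this tail is $O(L_0^{-1/2})=o(1)$. Next, \tref{short-Poisson} says that the count of $c$-light cycles of length $\leq L_0$ is, up to total variation distance $\eps$, Poisson with mean $\mu_n:=\E[\text{\# short $c$-light cycles}]$. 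Combining the asymptotic $\E[Z^{(k)}_c]\sim c^k k^{k-1}/k!$ (uniform for $k=o(\sqrt n)$, hence for $k\leq L_0$) with the long-cycle tail bound gives $\mu_n\to T(c)-c$ in the directed case and $\mu_n\to(T(c)-c-c^2)/2$ in the undirected case. Letting $\eps\to 0$ then yields $\Pr[\text{no }c\text{-light cycle}]\to e^{-(T(c)-c)}$ (respectively $e^{-(T(c)-c-c^2)/2}$), which rearranges to the stated formulas.

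For $c>1/e$, I would pick any $c_0\in(1/e,c)$ and any large constant $k_0$. By \tref{thm:EZ} at fixed $k_0$, $\E[Z^{(k_0)}_{c_0}]\to c_0^{k_0}k_0^{k_0-1}/k_0!\sim(c_0e)^{k_0}/\sqrt{2\pi k_0^3}$, which can be made as large as desired by choosing $k_0$ large, since $c_0e>1$. By \tref{short-Poisson} applied with this $c_0$, the count of $c_0$-light $k_0$-cycles is Poisson up to total variation distance $\eps$, so the probability it is zero is at most $e^{-\mu_n}+\eps$; choosing $k_0$ first to make $\mu_n$ ultimately huge and then sending $\eps\to0$ gives $\Pr[\exists c_0\text{-light cycle}]\to 1$, and a fortiori $\Pr[\exists c\text{-light cycle}]\to 1$.

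The main conceptual point, and the source of the discontinuity at $c=1/e$, is that the first-moment limit $T(c)-c$ stays finite throughout the subcritical regime (equal to $1-1/e$ at criticality), so the subcritical formula saturates strictly below $1$. The most delicate technical step is the long-cycle tail bound $\sum_{k>L_0}(ce)^k/\sqrt{k^3}=o(1)$: it barely holds at $c=1/e$ and fails for $c>1/e$, which is precisely why a separate argument is needed above the critical value and why the limiting probability jumps to $1$ there.
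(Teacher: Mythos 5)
Your proposal is correct and essentially matches the paper's own argument: both rule out long $c$-light cycles via the first-moment bound from \tref{thm:EZ}, then apply the Poisson approximation of \tref{short-Poisson} to the short cycles, with the subcritical formula coming from $\mu_n\to T(c)-c$ (resp.\ $(T(c)-c-c^2)/2$) and the jump at $c=1/e$ from the finiteness of this limit. In the supercritical case you restrict to a single fixed length $k_0$ and send $k_0\to\infty$, whereas the paper lets the Poisson mean over all lengths up to $L_0$ diverge directly, but this is a cosmetic variation rather than a different route.
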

\begin{proof}
  For $c\leq 1/e$, by the first moment estimate, with probability
  $1-o(1)$ there are no $c$-light cycles with length $>L_0=\log
  n/(2\log\log n)$.  By the Poisson approximation, there is a
  $c$-light cycle of length $\leq L_0$ with probability
  $\exp[-\mu]+o(1)$, where $\mu=(1+o(1))\sum_{k=2}^{L_0} k^{k-1}
  c^k/k!=T(c)-c+o(1)$ for the directed complete graph, and
  $\mu=(T(c)-c-c^2)/2+o(1)$ for the undirected complete graph.
  For fixed $c>1/e$, the sum $(1+o(1))\sum_{k=2}^{L_0} k^{k-1} c^k/k!$
  tends to infinity with $n$, and the Poisson approximation still
  holds, so with probability $1-o(1)$ there is a $c$-light cycle.
\end{proof}

So the finiteness of $T(c)-c$ and $(T(c)-c-c^2)/2$ at $c=1/e$ accounts
for the discontinuities in the curves in \fref{cdf-4}.  Recalling the
behavior of the tree function near $c=1/e$, we see that these curves
for the min mean-weight cycle have a square-root plus constant
behavior to the left of the critical point.

\subsection{Length of the cycle}

\begin{theorem} \label{thm:pk}
  Suppose $k$ is fixed as $n\to\infty$.  For the directed complete graph, for $k\geq 2$
  \begin{align*}
  \lim_{n\to\infty} \Pr[\text{min mean-weight cycle has length $k$}] &=\\
  \lim_{n\to\infty} \Pr\left[\text{min mean-weight cycle has length $k$ and weight $\leq \frac{k}{e}$}\right] &=
  \int_0^{1/e} \frac{c^{k-1} k^k}{k!} e^{-T(c)+c}\,dc.
  \end{align*}
  For the undirected complete graph, for $k\geq 3$
  \begin{align*}
  \lim_{n\to\infty} \Pr[\text{min mean-weight cycle has length $k$}] &=\\
  \lim_{n\to\infty} \Pr\left[\text{min mean-weight cycle has length $k$ and weight $\leq \frac{k}{e}$}\right] &=
  \int_0^{1/e} \frac{c^{k-1} k^k}{2 k!} e^{(-T(c)+c+c^2)/2}\,dc.
  \end{align*}
\end{theorem}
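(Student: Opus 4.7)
The plan is to combine the Poisson approximation of \tref{short-Poisson} with the first-moment estimate of \tref{thm:EZ} to evaluate the restricted probability (the one bounded by $k/(en)$) exactly as a first-arrival calculation in a Poisson process, and then show separately that the two probabilities appearing in the theorem differ by $o(1)$.

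First I would handle the restricted event $E_k^{\textup{light}}$ that the min mean-weight cycle has length $k$ \emph{and} mean weight at most $1/(en)$. By \tref{thm:EZ} combined with Stirling, $\E[Z_{1/e}^{(j)}]$ decays like $j^{-3/2}$, so the expected number of $(1/e)$-light cycles of length $j>L_0=\log n/(2\log\log n)$ is $O(L_0^{-1/2})=o(1)$; hence every $(1/e)$-light cycle has length $\le L_0$ with probability $1-o(1)$. By \tref{short-Poisson}, the collection of $(1/e)$-light cycles of length at most $L_0$ is within $\eps$ total variation distance of a Poisson point process on $(0,1/e]\times\{2,3,\dots,L_0\}$ whose length-$j$ intensity equals $\lambda_j(c)\,dc=(c^{j-1}j^j/j!)\,dc$ in the directed case, and $(c^{j-1}j^j/(2\cdot j!))\,dc$ for $j\ge 3$ in the undirected case, read off from~\eqref{eq:zk}. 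The event $E_k^{\textup{light}}$ is exactly that the first arrival (as $c$ increases from $0$) of this process is of length $k$ and occurs for some $c\le 1/e$; a standard competing-Poisson calculation then gives
\begin{equation*}
\lim_{n\to\infty} \Pr[E_k^{\textup{light}}] \;=\; \int_0^{1/e}\!\lambda_k(c)\exp\!\left[-\int_0^{c}\sum_{j\ge 2}\lambda_j(c')\,dc'\right]dc,
\end{equation*}
and the identities $\sum_{j\ge 2} c^j j^{j-1}/j! = T(c)-c$ and $\sum_{j\ge 3} c^j j^{j-1}/(2\cdot j!) = (T(c)-c-c^2)/2$ yield the claimed integrals.

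Next I would show that $\Pr[E_k] - \Pr[E_k^{\textup{light}}]=o(1)$, where $E_k$ is the unrestricted event that the min mean-weight cycle has length $k$. The difference event is that the min mean-weight cycle has length $k$ and mean weight strictly greater than $1/(en)$. For any fixed $\delta>0$,
\begin{equation*}
\Pr[E_k\setminus E_k^{\textup{light}}] \;\le\; \Pr[\text{no cycle has mean weight}\le(1/e+\delta)/n] + \Pr\bigl[\exists\, k\text{-cycle with mean weight in }((1/e)/n,(1/e+\delta)/n]\bigr].
\end{equation*}
The first term tends to $0$ by \tref{thm:c} applied at the supercritical value $1/e+\delta$, while the second is bounded by $\E[Z^{(k)}_{1/e+\delta}-Z^{(k)}_{1/e}]\to((1/e+\delta)^k-(1/e)^k)k^{k-1}/k!$ by \tref{thm:EZ}, which is $O(\delta)$ for fixed $k$. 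Letting $\delta\to 0$ after $n\to\infty$ gives the claim.

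The most delicate step is the Poisson first-arrival calculation itself: strictly speaking, \tref{short-Poisson} only controls the joint distribution of cycles of length at most $L_0$, whereas the exponential factor in the integrand involves a sum over all $j\ge 2$. This is reconciled by the tail estimate above (the contribution of long $(1/e)$-light cycles is $o(1)$ in mean, hence in probability), together with monotone convergence in $L_0$ for the finite-intensity sums $T(c)-c$ and $(T(c)-c-c^2)/2$ on $c\in[0,1/e]$. The undirected case is handled in parallel, with intensities halved for $j\ge 3$ and no length-$2$ contribution.
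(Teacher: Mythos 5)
Your proposal is correct and follows essentially the same approach as the paper: both combine the first-moment estimate of \tref{thm:EZ} with the Poisson approximation of \tref{short-Poisson} and a tail bound ruling out long $(1/e)$-light cycles, then treat the weight $>1/(en)$ case separately by sandwiching at a supercritical $1/e+\delta$ via \tref{thm:c} and taking $\delta\to0$ after $n\to\infty$. The only difference is presentational---the paper discretizes $[0,1/e]$ into width-$\Delta$ bins and sums, whereas you phrase the same computation as a continuous first-arrival/competing-Poisson integral, which is the $\Delta\to0$ limit of the paper's sum.
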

\begin{proof}
We subdivide the interval $[0,1/e]$ into subintervals of width $\Delta$,
and let $[c,c+\Delta]$ be one of these subintervals.
By Theorem~\ref{thm:EZ}, the expected number of $k$-cycles which
are $(c+\Delta)$-light but not $c$-light is
\[
(1+o(1))\frac{(k c^{k-1} \Delta) k^k}{k\times k!} + O(\Delta^2)
\]
in the directed setting, and half that in the undirected setting (for $k\geq 3$),
where the $o(1)$ term goes to $0$ for fixed $k$ when $\Delta\to0$ and $n\to\infty$.
Using Poisson approximation for cycles of length at most $L_0=\log n /(2\log\log n)$
and mean weight $\leq 1/e$ (Theorem~\ref{short-Poisson}), the fact that it is unlikely that
there is any cycle with mean weight $\leq 1/e$ and length more than $L_0$ (Theorem~\ref{thm:EZ}),
and the probability that there is a cycle with mean weight $\leq c/n$ (Theorem~\ref{thm:c}),
we see that the probability that the min-mean weight cycle has length $k$ and weight between
$c$ and $c+\Delta$ is
\[
 (1+o(1)) \frac{c^{k-1} k^k}{k!}\times e^{-T(c)+c}\,\Delta  + O(\Delta^2)
\]
in the directed setting, and
\[
 (1+o(1))\frac12 \frac{c^{k-1} k^k}{k!}\times e^{(-T(c)+c+c^2)/2}\, \Delta  + O(\Delta^2)
\]
in the undirected setting,
where the $o(1)$ terms go to $0$ uniformly in $c$ for fixed $k$ when $n\to\infty$ and $\Delta\to0$.
Summing these expressions over the subintervals of $[0,1/e]$ and taking the $\Delta\to0$ limit
gives the integral expressions for the probability that the min mean-weight cycle has length $k$
and mean weight $\leq 1/e$.

Next we consider the possibility that the min mean-weight cycle has
length $k$ and mean weight $>1/e$.  Suppose $0<\delta<1$.  With
probability tending to $1$ as $n\to\infty$, there is a
$(1+\delta/k)/e$-light cycle.  But the expected number of $k$-cycles
that are $(1+\delta/k)/e$-light but not $1/e$-light tends to $0$ as
$\delta\to 0$.  So the probability that the min mean-weight cycle has
length $k$ and mean weight $>1/e$ tends to $0$ as $n\to\infty$.
\end{proof}

The formulas in \tblref{tbl:p_k} are rewritten slightly using Equation~\ref{eq:tree-rec} to write 
$e^{-T(c)} = {c}/{T(c)}.$
\tref{thm:c} and \tref{thm:pk} imply
$$\lim_{n\to\infty} \Pr[\text{min mean-weight cycle has mean weight $>1/e$}] > 0$$
but
$$\sum_k \lim_{n\to\infty} \Pr[\text{min mean-weight cycle has length $k$ and mean weight $>1/e$}] = 0.$$
There is no contradiction of course.  In Section~\ref{supercritical}
we further investigate the length of the min mean-weight cycle when its mean
weight is $>1/e$.

\subsection{Tail behavior of the length distribution}

We can approximate the large-$k$ behavior of the probability $p_k$
that the min mean-weight cycle has length $k$ (sending $n$ to infinity
first and then $k$).
We make the substitution $c=(1-\delta)/e$ to obtain, for the directed complete graph,
$$ p_k = \frac{e^{-k} k^k}{k!} e^{1/e} \int_0^{1} (1-\delta)^k \frac{e^{-\delta/e}}{T((1-\delta)/e)}\,\frac{d\delta}{e}.$$
The integrand is approximately $e^{-k\delta}$ for small $\delta$, and large $\delta$ contribute negligibly, 
so the integral is approximately $1/(k e)$, and so for large $k$
$$ p_k = (1+o(1))\frac{e^{-1+1/e}}{\sqrt{2\pi}} k^{-3/2}.$$
For the undirected complete graph, a similar computation yields
$$ p_k = (1+o(1))\frac{e^{-1/2+1/(2e)+1/(2 e^2)}}{2\sqrt{2\pi}} k^{-3/2}.$$

By comparison, Janson~\cite{janson} shows that for the min max-weight cycle on the
undirected complete graph, the expected number of cycles with max
weight at most $c/n$ is $\frac12( \log \frac{1}{1-c} - c - c^2/2)$, so
the probability that such a cycle exists is
$ 1-(1-c)^{1/2} e^{c/2+c^2/4}$
which has its threshold at $1$, and so
$$p_k = \frac{1}{2} \int_0^1 c^{k-1} (1-c)^{1/2} e^{c/2+c^2/4}\,dc,$$
which for large $k$ is
$$ p_k = (1+o(1)) \frac{\sqrt{\pi}}{4} e^{3/4} k^{-3/2}.$$

The computations for the min max-weight cycle on the directed complete graph are similar, though it is perhaps surprising that unlike the previous three cases, the asymptotics of $p_k$ in this case are $p_k=\Theta(k^{-2})$.  
More specifically, we have
$$p_k = \int_0^1 c^{k-1} (1-c) e^{c}\,dc,$$
Letting $c=1-\delta/k$, the integrand is for small $\delta$ asymptotically
$$ e^{-\delta}\frac{\delta}{k} e\, \frac{d\delta}{k} ,$$
so we see that the integral is asymptotically $e/k^2$.

\section{Above the critical point: long light cycles} \label{supercritical}

Recall that a cycle~$C$ is $c$-light if $\weight(C)\leq\length(C)
c/n$.  We say that a cycle~$C$ is \textit{$A$-uniformly $c$-light} if $C$ is
$c$-light, and in addition, for every subpath $P$ of $C$,
$$\weight(P)\leq \Big[\length(P) + A\Big]\frac{c}{n}.$$
In the directed complete graph,
let $Z_{L,\delta}$ denote the number of $(1+\delta)/e$-light cycles of
length between $L-1/\delta$ and $L$, and let $Y^{L_1,L_2}_{\delta,A}$ denote
the number of $A$-uniformly $(1+\delta)/e$-light cycles of length $L$ for
which $L_1<L\leq L_2$.  We will eventually choose the parameters
$\delta$, $L_1$, $L_2$, and $A$ so that
\begin{align*}
  \delta &= \Theta(1/\log^2 n)\\
  L_1, L_2 &= \omega(\log^2 n \log\log n) \\
  A &\approx \log n.
\end{align*}
We aim to show that with high probability such cycles exist.

We use a theorem of Koml\'os, Major, and Tusn\'ady \cite[Theorem~1]{KMT2}
from the second of two papers they wrote relating random walk to Brownian motion:
\begin{theorem}[\cite{KMT2}]\label{thm:KMT}
  Suppose that $X_1,X_2,\dots$ are i.i.d.\ random variables with expected value $0$, variance $1$, and finite exponential moments, i.e., their density function $f(x)$ satisfies $\int e^{t x} f(x)\,dx<\infty$ for $|t|\leq t_0>0$.  Then these random variables can be coupled to i.i.d.\ standard normal random variables $Y_1,Y_2,\dots$ such that for any $\lambda$ there are constants $K_1$ and $K_2$ for which
 $$ \Pr\left[\max_{k\leq n} \left|\sum_{i=1}^k X_i-\sum_{i=1}^k Y_i\right| > K_1\log n + x\right] < K_2\, e^{-\lambda x}.$$
\end{theorem}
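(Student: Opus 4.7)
The plan is to follow the original Hungarian construction of Koml\'os, Major, and Tusn\'ady, which is a dyadic quantile coupling built top-down rather than by incrementally adding one variable at a time. First I would reduce to the case $n = 2^N$ by padding, which costs at most a constant factor in the error bound. I would then specify a joint law for $(S_n, T_n)$, where $S_k = \sum_{i \leq k} X_i$ and $T_k = \sum_{i \leq k} Y_i$, and for each dyadic subinterval $[a,b]$ define a joint law for the midpoint $(S_{(a+b)/2}, T_{(a+b)/2})$ conditional on the endpoints, again by quantile matching. Thus the coupling is built recursively from the coarsest scale $n$ down to scale $1$.

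The workhorse at every level is a quantile coupling lemma: if $F_m$ denotes the CDF of $\sum_{i \leq m} X_i$ and $\Phi_m$ the $N(0,m)$ CDF, then the coupling $X = F_m^{-1}(\Phi_m(Y))$ should satisfy
\[
|X - Y| \leq C_1 \frac{Y^2 + 1}{\sqrt{m}} + C_2
\]
on the bulk event $|Y| \leq c\sqrt{m}$, with a Gaussian-type bound off this event. The proof of this lemma proceeds through a sharp non-uniform Edgeworth expansion for $F_m$ that is valid deep into the tails; this is where the finite-exponential-moment hypothesis (equivalent to Cram\'er's condition) enters.

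Applying this quantile lemma at every node of the dyadic tree and iterating through the $\log_2 n$ levels, the error introduced at level $k$ (blocks of size $2^k$) contributes a bounded random variable with exponential tail, plus an $O(Y^2/\sqrt{2^k})$ correction that sums geometrically in $k$. A union bound over the $\log_2 n$ levels absorbs the per-level constants into the $K_1 \log n$ slack in the statement, while the $K_2 e^{-\lambda x}$ tail in the conclusion comes from combining the per-level exponential tails at the worst deviation.

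The hardest step will be the quantile coupling lemma itself, with the correct dependence on $Y$ and with a genuine exponential tail on the coupling error, uniformly not only in the bulk but into the tails out to the relevant large-deviation scale. Establishing this demands sharp Edgeworth-type estimates under Cram\'er's condition together with careful bookkeeping as one approaches the boundary of validity of the expansion. This is the technical heart of the KMT construction and the point at which the finite-exponential-moment hypothesis is indispensable; without it one can only achieve weaker rates, as in the later work of Sakhanenko and others.
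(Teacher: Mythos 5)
The paper does not prove this theorem; it is quoted verbatim (as Theorem~1 of the second KMT paper) and then used as a black box in the proof of Lemma~\ref{uniformly}. So there is no ``paper's own proof'' to compare against, and you have set yourself the considerably harder task of sketching the original Hungarian construction. Taken on those terms, your outline is essentially the right one: the reduction to $n=2^N$, the top-down dyadic conditional-quantile coupling, the quantile transform $X=F_m^{-1}(\Phi_m(Y))$ as the engine at each node, the Edgeworth/large-deviation estimate under Cram\'er's condition as the ``hard analytic lemma,'' and the identification of the $K_1\log n$ term as the accumulation over $\log_2 n$ dyadic scales. These are indeed the structural ingredients of KMT's proof.

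Two cautionary remarks, both at the level of bookkeeping that your sketch deliberately suppresses but which are exactly where the real difficulty lies. First, the per-scale errors do not simply ``union-bound'': the error at scale $k$ depends on the (coupled) conditional increment at that scale, and one must show that the sum of $O(\log n)$ dependent exponential-tailed contributions still has an exponential tail of the advertised rate; this requires a careful martingale-type or inductive argument, not a union bound, and it is where the constant in front of $\log n$ is actually controlled. Second, you state ``for any $\lambda$'' as if it were free; in the original KMT theorem the rate $\lambda$ is a fixed constant depending on the distribution (in particular on $t_0$), and the form quoted in the paper---with $\lambda$ arbitrary and $K_1,K_2$ depending on it---requires a small additional argument or a slightly different normalization. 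If you intend to make the sketch into a proof, be explicit about the range of $\lambda$ for which the quantile lemma's tail estimate holds and how $K_1,K_2$ absorb the dependence. For the purposes of this paper, however, none of this is needed: the theorem is a citation, and Lemma~\ref{uniformly} only invokes it with a single fixed $\lambda=20$.
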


\begin{lemma} \label{uniformly}
  Let $C$ be a particular cycle of length $L$, and let $W$ be its (random) weight.
  By definition, cycle $C$ is $(nW/L)$-light.
  The probability that $C$ is $A$-uniformly $(nW/L)$-light is independent of $W$ and $n$, and is at
  least $$\exp[-(\pi^2/2+o(1)) L/A^2],$$
  where the $o(1)$ term tends to $0$ as $L/A^2\to\infty$
  and $L/A^3\to 0$.
\end{lemma}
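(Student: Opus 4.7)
My plan is to reduce the uniformly-light event to a small-range event for a Brownian bridge via the Gaussian coupling of Theorem~\ref{thm:KMT}, then apply the classical small-ball estimate. Writing the edges of $C$ in cyclic order as $e_1,\ldots,e_L$, set $S_k := \sum_{i\le k} e_i$ and $W := S_L$. Unpacking both the non-wrapping and wrapping cases, the $A$-uniformly $(nW/L)$-light condition requires every cyclic arc of length $\ell$ to have weight at most $(\ell+A)W/L$, which is equivalent to the single range condition
\[
\max_{0\le k\le L} B^*_k \,-\, \min_{0\le k\le L} B^*_k \,\le\, \frac{A\,W}{L}, \qquad B^*_k := S_k - \tfrac{k}{L}\,S_L.
\]
Both sides scale identically under $e_i\mapsto\lambda e_i$, so by the standard fact that, conditional on $W$, the vector $(e_1,\ldots,e_L)$ is $W$ times a uniform Dirichlet vector, the probability of this condition is independent of $W$ (and $n$ does not appear). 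This establishes the first claim.

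For the lower bound I couple $B^*$ to a Brownian bridge. The variables $X_i := e_i - 1$ are iid with mean $0$, variance $1$, and finite exponential moments, so Theorem~\ref{thm:KMT} supplies iid standard normals $Y_1,\ldots,Y_L$ with $\mathcal{W}(k):=\sum_{i\le k} Y_i$ satisfying, for any $\lambda$,
\[
\Pr\!\left[\max_{k\le L}\bigl|(S_k-k)-\mathcal{W}(k)\bigr| > K_1\log L + x\right] \le K_2\,e^{-\lambda x}.
\]
Linearly detrending gives $B^*_k = \sqrt{L}\,\mathcal{B}(k/L) + \epsilon_k$ with $\max_k|\epsilon_k|\le 2(K_1\log L + x)$, where $\mathcal{B}(t):=(\mathcal{W}(Lt)-t\,\mathcal{W}(L))/\sqrt{L}$ is a standard Brownian bridge on $[0,1]$. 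Concentration of $S_L$ also gives $W/L = 1 + O(L^{-1/2}\log L)$ off a negligible event.

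Setting $r := A/\sqrt{L}$ (so $r\to 0$ while $1/r^2 = L/A^2\to\infty$), the uniformly-light event contains the joint event that (i) the KMT coupling holds at level $x$ and (ii) $\mathcal{B}$ stays inside $[-r(1-\eta)/2,\,r(1-\eta)/2]$ on all of $[0,1]$, where $\eta\to 0$ absorbs the KMT and $W/L$ corrections. The classical Brownian-bridge small-ball estimate, via the Dirichlet heat kernel on $(-r/2,r/2)$ (principal eigenvalue $\pi^2/(2r^2)$) together with the identity $\Pr[|\mathcal{B}(t)|\le r/2\ \forall t] = \sqrt{2\pi}\,p_1^{\mathrm{Dir}}(0,0)$, gives
\[
\Pr\!\left[\,|\mathcal{B}(t)|\le r/2\ \forall t\in[0,1]\,\right] \,=\, (1+o(1))\,\frac{2\sqrt{2\pi}}{r}\,e^{-\pi^2/(2r^2)} \,\ge\, \exp\!\left[-\tfrac{\pi^2}{2r^2}(1+o(1))\right].
\]
Substituting $r = A/\sqrt{L}$ yields $\exp[-(\pi^2/2+o(1))L/A^2]$.

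The key obstacle (and the reason for the hypothesis $L/A^3\to 0$) is balancing the KMT threshold $x$ so that the coupling error $O(\log L + x)$ is $o(A)$ while the failure probability $K_2 e^{-\lambda x}$ is much smaller than the target $\exp[-\pi^2 L/(2A^2)]$. Both conditions are met simultaneously exactly when $L/A^3\to 0$: pick any $x$ in the window $L/A^2\ll x\ll A$ and take $\lambda$ sufficiently large, so the failure probability becomes $e^{-\omega(L/A^2)}$ and the error becomes $o(A)$, tightening $r$ only by a $1-o(1)$ factor and leaving the leading constant $\pi^2/2$ intact.
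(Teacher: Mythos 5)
Your proof is correct and follows the same overall strategy as the paper's: exploit scale invariance after conditioning on the total weight~$W$, apply the KMT coupling of Theorem~\ref{thm:KMT}, and finish with a Brownian small-ball estimate. The reduction step is different in a noteworthy way, though. The paper does not detrend; it imposes one-sided sufficient conditions on the raw partial sums $\sum_{i\le k}(W_i-1)$ --- the total in $[0,\eps A]$ and each partial sum in $\pm\tfrac{1-\eps}{2}A$ --- then verifies by a case split that these imply \eqref{W-uniform}, and uses the Brownian-motion interval-staying estimate together with an endpoint constraint (the $\Theta(\eps)$ prefactor in \eqref{intervals}), absorbing that prefactor into the exponent. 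Your reduction linearly detrends, forming $B^*_k = S_k - (k/L)S_L$, and observes that the $A$-uniformly light condition is \emph{exactly} the range condition $\max_k B^*_k - \min_k B^*_k \le AW/L$, which encodes the wrapping and non-wrapping arcs simultaneously and symmetrically; you then invoke the Brownian-bridge small-ball estimate directly. Both routes extract the same constant $\pi^2/2$ from the same KMT window (nonempty precisely when $L/A^3\to 0$, which also handles $\log L = o(A)$ since $A\geq L^{1/3}$). Your characterization is an equivalence rather than a one-sided implication, which is cleaner and makes the bridge structure explicit; the paper's argument is marginally more elementary in using only the BM interval-staying estimate rather than the bridge-kernel identity $\Pr[\,|\mathcal{B}|\le a \text{ on }[0,1]\,] = \sqrt{2\pi}\,p_1^{\mathrm{Dir}}(0,0)$.
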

\begin{proof}
  The edge weights $W_1,\dots,W_L$ are distributed as $L$ independent
  exponential random variables with mean $1$.  The total weight is $W
  = \sum_{i=1}^L W_i$.  Because the edge weights are exponential
  random variables, conditioning on the total weight $W$ does not
  affect the joint distribution of relative weights $W_i/W$, which are
  distributed as the arc lengths of the arcs between $L$ uniformly
  random points placed on a circle of unit circumference.  The cycle
  is by definition $(nW/L)$-light.  
  The cycle is
  $A$-uniformly $(nW/L)$-light when
  \begin{equation} \label{W-uniform}
  \sum_{i=1}^\ell W_{a+i \bmod L} \leq (\ell+A) \frac{\sum_{i=1}^L W_i}{L}
  \end{equation}
  for each $a,\ell\in\{1,\dots,L\}$, where $a+i\bmod L$ is interpreted
  as a value in the range $1,\dots,L$.
  This property is invariant under uniform scalings of the edge weights,
  so whether or not the cycle is also
  $A$-uniformly $(nW/L)$-light is solely a function of these random
  arc lengths, independent of the total weight $W$ and $n$.

  Let $X_i=W_i-1$.  Suppose that $0<\eps\leq 1/4$ and
  \begin{equation} \label{Xtotal}
  0 \leq \sum_{i=1}^L X_i \leq \eps A,
  \end{equation}
 and that for each $k\in \{1,2,\dots,L\}$,
  \begin{equation}\label{Xpartial}
  -\frac{1-\eps}{2} A \leq \sum_{i=1}^k X_i \leq \frac{1-\eps}{2} A .
  \end{equation}
  Equation \eqref{Xtotal} implies $\sum_{i=1}^L W_i \geq L$.  When
  $a+\ell \leq L$, \eqref{Xpartial} implies $\sum_{i=1}^\ell W_{a+i}
  \leq \ell+(1-\eps)A$, which then implies \eqref{W-uniform} when
  $a+\ell\leq L$.  If $a+\ell>L$, then $\sum_{i=1}^\ell W_{a+i\bmod L} = \sum_{i=1}^L W_i - \sum_{i=1+a+\ell\bmod L}^{a} W_i \leq \eps A + (1-\eps)A + \ell$ by \eqref{Xtotal} and \eqref{Xpartial}, so again we have \eqref{W-uniform}.

  Now $X_i$ has zero mean, unit variance, and finite exponential
  moments, so Theorem~\ref{thm:KMT}
  implies that the partial sums $\sum_{i=1}^k X_i$ are well
  approximated by a standard Brownian motion.
  It is known how to compute using Fourier analysis the probability that a standard Brownian
  motion $B_t$ stays within an interval up through time $T$.  If the
  interval is $[-a,a]$, then this probability is
  $$\Theta\left(\exp\left[-\frac{\pi^2}{8} \frac{T}{a^2}\right]\right)$$
  (see e.g., \cite[pp.~216--218]{MR2604525}).
  Conditional upon the Brownian motion remaining within this interval,
  its final position within the interval at time $T$ has a well-behaved
  distribution which for large $T/a^2$ converges to a sine function, since the other Fourier coefficients decay more rapidly.

  We set $T=L$ and $a= A (1-2\eps)/2$; we see that the Brownian motion $B_t$ stays
  within the interval $\pm A(1-2\eps)/2$ and ends within the interval
  $(\eps A/3, 2\eps A/3)$ with probability at least
  \begin{equation} \label{intervals}
  \Theta\left(\eps \times
    \exp\left[-\frac{1}{(1-2\eps)^2}\frac{\pi^2}{2}
      \frac{L}{A^2}\right]\right).
  \end{equation}
  If this event occurs, and the
  partial sums $\sum_{i=1}^k X_k$ are within $\eps A/3$ of the
  Brownian motion, then equations \eqref{Xpartial} and \eqref{Xtotal}
  hold.

  By assumption $L/A^3\to 0$, so let us suppose $L/A^3\leq 1$.  Then
  $A \geq L^{1/3}$.  By assumption $L/A^2\to\infty$, so $L\to\infty$.
  Let us take
  $$ \eps = 6 \max\big(L/A^3, K_1\log L/A, e^{-L^{1/2}/A}\big), $$
  which by our assumptions tends to $0$, and we can suppose that it is
  at most $1/4$ as assumed above.

  In the KMT theorem, we choose $\lambda=20$ and $x=L/A^2$.  By our
  choice of $\eps$, the deviation $K_1 \log L + x$ is smaller than $\eps
  A/3$.  The probability that the Brownian motion and the random walk
  are not within $\eps A/3$ of one another is at most $K_2\, e^{-20
    L/A^2}$.  Now $20 > \pi^2/(2(1-2\eps)^2)$, so provided $L/A^2$ is
  sufficiently large, even if we condition on the unlikely event that
  the Brownian motion stays within the interval and ends within an even
  smaller interval, it is still extremely likely that the random walk
  does not deviate more than $\eps A/3$ from the Brownian motion.
  Thus, the probability that the cycle is $A$-uniformly $c$-light
  can be bounded below by an expression of the form \eqref{intervals}.
  Since $\eps \geq e^{-L^{1/2}/A}$ and $L/A^2\to\infty$, the factor of $\eps$ in
  \eqref{intervals} can be absorbed into the exponent where it becomes
  $\log\eps=o(L/A^2)$, and since $\eps\to0$, \eqref{intervals} can be
  further rewritten as $\exp[-(\pi^2/2+o(1)) L/A^2]$.
\end{proof}

\begin{lemma} \label{lemma:averagebig}
  For the directed complete graph, the expected number of $A$-uniformly $(1+\delta)/e$-light directed cycles
  with length more than $L_1$ and at most $L_2$ is
  \[\E\big[Y^{L_1,L_2}_{\delta,A}\big] = \sum_{L=L_1+1}^{L_2}
    \frac{(1+\delta)^L}{L^{3/2}}
    \exp[-(\pi^2/2+o(1)) L/A^2],\]
  where the $o(1)$ term tends to $0$ as $L_2^2/n\to 0$, $L_1/A^2\to\infty$,
  and $L_2/A^3\to 0$.  (For the undirected graph, the expected value is half as large.)
\end{lemma}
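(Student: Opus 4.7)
The plan is, for each length $L \in \{L_1+1, \ldots, L_2\}$, to factor the expected count of $A$-uniformly $(1{+}\delta)/e$-light $L$-cycles as (number of $L$-cycles) $\times$ (probability a fixed one is $(1{+}\delta)/e$-light) $\times$ (conditional probability of the subpath refinement), then sum over $L$.

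By \lref{lemma:prelim2} and the hypothesis $L_2^2/n \to 0$, the number of directed $L$-cycles in $K_n$ is $N_L = (1+o(1))\, n^L/L$ uniformly for $L \leq L_2$. The total weight of a fixed cycle is Gamma$(L,1)$-distributed, so with $M = L(1+\delta)/(en) = o(1)$,
\[
\Pr[\text{cycle is }(1{+}\delta)/e\text{-light}] = \int_0^M \frac{e^{-x} x^{L-1}}{(L-1)!}\,dx = (1+o(1))\frac{M^L}{L!} = (1+o(1)) \frac{(1+\delta)^L}{\sqrt{2\pi L}\, n^L},
\]
using $e^{-x} = 1+o(1)$ on $[0,M]$ and Stirling's formula. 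For the third factor I would invoke \lref{uniformly}: conditional on the weight $W$, the normalized edge weights $W_i/W$ are distributed as arc lengths of $L$ uniform points on a circle, so the event ``$C$ is $A$-uniformly $(nW/L)$-light'' depends only on these ratios, is independent of $W$, and has probability at least $\exp[-(\pi^2/2 + o(1))L/A^2]$. Crucially, on the event $W \leq M$ we have $nW/L \leq (1+\delta)/e$, and enlarging $c$ only loosens the definition of $A$-uniformly $c$-light; thus being $A$-uniformly $(nW/L)$-light implies being $A$-uniformly $(1+\delta)/e$-light. Multiplying the three factors, noting that the constant $1/\sqrt{2\pi}$ contributes $-\tfrac12\log(2\pi)$ to the exponent which is $o(L/A^2)$ whenever $L/A^2 \to \infty$, and summing over $L$ yields the stated lower bound.

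The main obstacle is the matching upper bound, which asks that the $A$-uniform condition is not much easier than the analysis above assumes when $W$ is allowed well below $M$. To handle it I would observe that conditional on $W \le M$ the density of $\tau = W/M$ is asymptotically $L\tau^{L-1}$ on $[0,1]$, so $\tau = 1 - O_p(1/L)$; rewriting the subpath condition in terms of $\pi_i = W_i/W$ turns it into $\sum_{i\in P}\pi_i \leq (1/\tau)(|P|+A)/L$. Fourier/Dirichlet-spectrum upper bounds complementing \lref{uniformly} give probability $\sim \exp[-\pi^2 L /(2 A^2 \tau^2)]$ for this confinement, which at $\tau = 1 - O(1/L)$ perturbs the exponent only by $O(1/A^2) = o(L/A^2)$. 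Integrating against the Beta$(L,1)$ density of $\tau$ then yields the matching upper bound within the stated $o(1)$. All the genuinely hard work is inside \lref{uniformly}; the present lemma is first-moment bookkeeping on top of it.
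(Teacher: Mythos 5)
Your factorization $\E[Y^{(L)}_{\delta,A}]=N_L\cdot\Pr[\text{light}]\cdot\Pr[\text{uniform}\mid\text{light}]$, using \lref{lemma:prelim2}, Stirling, and \lref{uniformly} (via the monotonicity observation that $A$-uniformly $(nW/L)$-light implies $A$-uniformly $(1+\delta)/e$-light once $W\le M$), and then absorbing the $(1+o(1))/\sqrt{2\pi}$ into the exponent because $L/A^2\to\infty$, is exactly the paper's proof; the paper just states it as a two-line remark citing \tref{thm:EZ} and \lref{uniformly}. So your lower bound is correct and the same argument.

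You are also right to notice that \lref{uniformly} only supplies a \emph{lower} bound on the confinement probability, so the ``$=$'' in the lemma nominally asserts more than the cited ingredients give. The paper does not address this and in fact never uses the upper half: the second-moment argument in \tref{thm:max-weight} needs only $\E[Y]\gg\Var[Y]/\E[Y]$, i.e.\ a lower bound on $\E[Y]$, with the trivial upper bound $\E[Y]\le\E[Z]$ sufficing elsewhere. So your extra paragraph is supplementary rather than filling a gap the paper fills differently; it is a fair observation that the lemma is stated a bit more strongly than it is proved or used. That said, your sketch of the upper bound has a sign error: conditional on $W=\tau M$, the constraint $\sum_{i\in P}\pi_i\le(|P|+A)/(L\tau)$ becomes \emph{easier} as $\tau$ decreases (the band widens), so $p(\tau)$ should behave like $\exp[-\pi^2 L\tau^2/(2A^2)]$, increasing as $\tau\downarrow 0$, not $\exp[-\pi^2 L/(2A^2\tau^2)]$, which incorrectly sends $p(\tau)\to 0$. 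With the corrected exponent, integrating against the density $L\tau^{L-1}$ does give $(1+o(1))\exp[-\pi^2 L/(2A^2)]$, since the $\tau^{L-1}$ concentration at $\tau=1-O(1/L)$ dominates the $\exp[\Theta(Ls/A^2)]$ gain from decreasing $\tau$ by $s$; but even so, one would still need to establish the Brownian-confinement \emph{upper} bound in the Dirichlet/conditioned setting (the $\Theta(\cdot)$ for Brownian motion plus a reverse KMT comparison), which the paper nowhere proves.
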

\begin{proof}
  Essentially immediate from the first moment estimates for $c$-light cycles (\tref{thm:EZ}) and
  \lref{uniformly}.  Since $L/A^2\to\infty$, the factor of $(1+o(1))/\sqrt{2\pi}$ gets absorbed into
  the $o(1)$ term in the exponent.
\end{proof}

\begin{lemma}\label{lemma:secondmoment}
  If $L_2^3 e^A/n\leq 1/2$, then 
\[\frac{\Var\big[Y^{L_1,L_2}_{\delta,A}\big]}{\E\big[Y^{L_1,L_2}_{\delta,A}\big]}\leq 
  1 + \frac{2 L_2^3 e^A (1+\delta)^{L_2}}{n}  .\]
  (This same bound holds for both the directed and undirected complete graph.)
\end{lemma}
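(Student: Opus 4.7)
The plan is the standard second-moment method, with the $A$-uniformity constraint used to tame the correlation between overlapping cycles.  Writing $Y := Y^{L_1,L_2}_{\delta,A}=\sum_C X_C$ indexed over directed cycles $C$ with $L_1<\length(C)\le L_2$, where $X_C$ indicates that $C$ is $A$-uniformly $(1{+}\delta)/e$-light, I would start from
$\Var[Y]=\sum_C \Var[X_C] + \sum_{C\neq C'}\bigl(\E[X_C X_{C'}]-\E[X_C]\E[X_{C'}]\bigr).$
Using $\Var[X_C]\le\E[X_C]$ for the diagonal, noting that edge-disjoint pairs contribute zero (by independence of edge weights), and dropping the negative $-\E[X_C]\E[X_{C'}]$ terms for overlapping pairs, this reduces the lemma to showing
$$\sum_{C\neq C',\ C\cap C'\neq\emptyset}\E[X_C X_{C'}]\;\le\;\E[Y]\cdot\frac{2L_2^3 e^A(1{+}\delta)^{L_2}}{n}.$$

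For a fixed overlapping pair, I would parametrize as in the proof of \tref{short-Poisson}: $C\cap C'$ consists of $i\ge 1$ disjoint subpaths of $C$ with lengths $k_1,\dots,k_i$ summing to $k$, with $|C|=k+\ell$ and $|C'|=k+m$.  Writing $U_j$ for the weight of the $j$-th shared subpath and $V$ for the weight of $C'\setminus C$, the $A$-uniformity of $C$ gives $X_C\le\prod_j\mathbf{1}[U_j\le B_j]$ with $B_j=(k_j+A)(1{+}\delta)/(en)$, while $X_{C'}\le\mathbf{1}[\sum_j U_j+V\le T']$ with $T'=(k+m)(1{+}\delta)/(en)$.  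Since $V$ is independent of $\{U_j\}$, integrating gives
$$\E[X_C X_{C'}]\;\le\;\int_{u_j\in[0,B_j]}\prod_j\frac{u_j^{k_j-1}}{(k_j-1)!}\cdot\frac{(T'-\sum_j u_j)_+^m}{m!}\,du.$$
The estimate $(1+A/k_j)^{k_j}\le e^A$ together with Stirling yields $\prod_j B_j^{k_j}/k_j!\le e^{iA}(1{+}\delta)^k/n^k$, and a careful bound on the remaining $(T'{-}\sum u_j)^m/m!$ factor (using that the weight $\prod u_j^{k_j-1}\,du$ concentrates $\sum u_j$ near $B=\sum B_j$, so that $T'-\sum u_j$ is effectively of order $(m+O(iA))(1{+}\delta)/(en)$ on the bulk of the region) gives
$\E[X_C X_{C'}]\le \E[X_C]\cdot e^{iA}(1{+}\delta)^m/n^m$ up to polynomial factors in $L_2$.

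Finally, I would sum over overlapping pairs using the counting in \tref{short-Poisson}, which bounds the number of $C'$ with overlap parameters $(k,m,i)$ for fixed $C$ by $L_2^{3i}n^{m-i}$.  This yields
$$\sum_{C'\ \text{overlapping}\ C}\frac{\E[X_C X_{C'}]}{\E[X_C]}\;\le\;\sum_{\substack{k\ge 1,\,m\ge 1\\ i\ge 1}}\bigl(L_2^3 e^A/n\bigr)^{i}(1{+}\delta)^m\cdot\mathrm{poly}(L_2),$$
and the hypothesis $L_2^3 e^A/n\le 1/2$ makes $\sum_i(\cdot)^i$ a geometric series bounded by $2L_2^3 e^A/n$, while the sums over $k$ and $m$ contribute at most $(1{+}\delta)^{L_2}$ and a polynomial in $L_2$ absorbed into the constant, giving the required $\sum\le\E[Y]\cdot 2L_2^3 e^A(1{+}\delta)^{L_2}/n$.

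The main obstacle is extracting the clean $e^{iA}$ factor rather than a spurious extra $e^k$: naively bounding $(T'{-}\sum u_j)^m\le(T')^m$ and invoking Stirling gives $(T')^m/m!\sim e^{k+m}(1{+}\delta)^m/(en)^m$, contributing an unwanted $e^k$ that blows up the subsequent sum over $k$.  Avoiding this requires using concentration of $\prod u_j^{k_j-1}\,du$ near $u_j=B_j$---equivalently, conditioning on $X_C$ and using that the total weight of $C$ concentrates near its maximum $L(1{+}\delta)/(en)$, which via the subpath weight bounds forces $U=\sum U_j$ to lie within $iA(1{+}\delta)/(en)$ of $k(1{+}\delta)/(en)$, so $T'-U$ is of order $m(1{+}\delta)/(en)$ up to a shift of order $iA/n$.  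The $e^A$ visible in the final bound then emerges solely from the $i=1$ term of the geometric sum over $i$.
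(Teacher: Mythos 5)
Your overall architecture — second moment, discard edge-disjoint pairs, decompose overlaps into $i$ paths, sum a geometric series in $L_2^3 e^A/n$, and cap $m\le L_2$ — matches the paper's proof. But there is a genuine gap in the middle, and it comes from applying the $A$-uniformity to the wrong cycle and the wrong subpaths. You invoke the $A$-uniformity of $C$ on the \emph{shared} subpaths $C\cap C'$, which forces you to then estimate the joint integral $\int \prod_j u_j^{k_j-1}/(k_j-1)! \cdot (T'-\sum u_j)_+^m/m!\,du$, and you correctly notice that the naive bound smuggles in an extra $e^k$ that ruins the sum over $k$. The paper does something cleaner: condition on all of $C_1$'s edge weights (so $\E[U_{C_1}U_{C_2}] = \E[U_{C_1}]\,\E[U_{C_2}\mid U_{C_1}=1]$), and then bound $\E[U_{C_2}\mid U_{C_1}=1]$ using only the $A$-uniformity constraint of $C_2$ applied to the \emph{non-shared} subpaths $C_2\setminus C_1$. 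Each such subpath of length $m_j$ must have weight at most $(m_j+A)(1+\delta)/(en)$, and since these edges are disjoint from $C_1$, the conditioning evaporates: the conditional probability is just $\prod_j w_j^{m_j}/m_j!\le e^{Ai}(1+\delta)^m/n^m$. No joint integral, no $e^k$, no need for concentration.

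Your proposed fix — concentration of $\prod u_j^{k_j-1}\,du$ near $u_j=B_j$, or equivalently conditioning and claiming $T'-U$ is ``effectively of order $(m+O(iA))(1+\delta)/(en)$'' — is not a proof. First, your displayed integral is an unconditional bound that throws away all constraints of $X_C$ on the edges $C\setminus C'$, so there is no mechanism by which $\E[X_C]$ re-emerges as a factor; you would need to genuinely condition and track the conditional law of the shared weights given $X_C=1$, which is a different calculation from the one you wrote. Second, even granting the heuristic, the claimed order $(m+O(iA))(1+\delta)/(en)$ for $T'-U$ produces $((m+iA)/e n)^m/m!$, which is much larger than the target $e^{iA}(1+\delta)^m/n^m$ whenever $iA\gg m$ (a case that does occur, e.g.\ many length-one non-shared paths with $A\approx\log n$); meanwhile if $U$ really concentrates near $\sum B_j=(k+iA)(1+\delta)/(en)$ then $T'-U$ is \emph{negative} when $iA>m$ and the heuristic says nothing. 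Third, your final step absorbs a $\mathrm{poly}(L_2)$ into ``the constant,'' but the lemma has an explicit constant $2$; the paper's argument produces no such stray polynomial because the number of $C'$ with given $(i,m_1,\dots,m_i)$ is bounded by $L_2^{2i}n^{m-i}$ and the extra $L_2^i$ from summing over the $m_j$'s is exactly what turns $L_2^{2i}$ into $L_2^{3i}$ — nothing is left over. Switching to the paper's choice of constraint (non-shared subpaths of $C'$, uniformity of $C'$) removes all of these obstructions at once.
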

\begin{proof}
  For any cycle~$C$ of length between $L_1$ and $L_2$, let $U_C$
  denote the indicator of the event that $C$ is uniformly light
  (within this proof, ``uniformly light'' means $A$-uniformly
  $(1+\delta)/e$-light).  Let $C_1$, $C_2$ be two cycles whose lengths
  are both in the range $(L_1,L_2]$.  If $C_1$
  and $C_2$ have no edges in common then $U_{C_1}$ and $U_{C_2}$ are
  independent.  So we have
\begin{align*}
\Var\big[Y^{L_1,L_2}_{\delta,A}\big]
&=\sum_{C_1}   \sum_{C_2} (\E[U_{C_1}U_{C_2}]-\E[U_{C_1}]\E[U_{C_2}]) \\
&=\sum_{C_1}\sum_{C_2: |C_2\cap C_1|\geq 1}  (\E[U_{C_1}U_{C_2}]-\E[U_{C_1}]\E[U_{C_2}]) \\
&\leq \sum_{C_1}\sum_{C_2: |C_2\cap C_1|\geq 1}  \E[U_{C_1}U_{C_2}] \\
&= \sum_{C_1}  \E[U_{C_1}]  \sum_{C_2: |C_2\cap C_1|\geq 1}  \E[U_{C_2}|U_{C_1}=1].
\end{align*}

\enlargethispage{22pt}
We partition the inner sum into sub-sums depending on the overlaps between $C_1$ and $C_2$.
If $C_2=C_1$, then of course $\E[U_{C_2}|U_{C_1}=1]=1$.  Otherwise,
$C_2\setminus C_1$ consists of a collection of disjoint paths, say that there are $i$ of them, and that their lengths are
$m_1,m_2,\ldots, m_i$.  Let $m=\sum_j m_j<L_2$.  To specify the $j$th path, we can specify its start and end points on $C_1$, as well as the internal vertices, so there are
$\leq L_2^2 n^{m_j-1}$ possible such subpaths.  Hence the number of such $C_2$'s is at most $L_2^{2i}n^{m-i}$.  Conditional on the weights in $C_1$, the probability that $C_2$ is uniformly light is at most the probability that for each $j$, the $j$th subpath $C_2\setminus C_1$ has weight at most $w_j:=(m_j+A)(1+\delta)/(en)$.  The probability that the $j$th subpath is light enough is at most
$$ \frac{w_j^{m_j}}{m_j!} \leq \frac{[(1+\delta) (1+A/m_j)]^{m_j}}{n^{m_j}\sqrt{2\pi m_j}}
  \leq  \frac{e^A (1+\delta)^{m_j}}{n^{m_j}},
$$
and so the conditional probability that $C_2$ is uniformly light is at most
$e^{A i}(1+\delta)^m/n^m$.
We see that the contribution to $\sum_{C_2: |C_2\cap C_1|\geq 1}  \E[U_{C_2}|U_{C_1}=1]$ from the case where $C_2\setminus C_1$
consists of $i$ paths of lengths $m_1,\dots,m_i$ is
\begin{align*}
\sum_{C_2: C_2\setminus C_1 =i \text{ paths of lengths }m_1,\dots, m_i}     \E[U_{C_2}|U_{C_1}]
  &\leq  \left[\frac{L_2^2 e^A}{n}\right]^i (1+\delta)^m.
\intertext{Since each $m_j\leq L_2$, the total contribution from cases where $C_2\setminus C_1$ consists of $i$ paths is}
\sum_{C_2: C_2\setminus C_1 =i\text{ paths}}
     \E[U_{C_2}|U_{C_1}] &\leq  \left[\frac{L_2^3 e^A}{n}\right]^i (1+\delta)^m.
\end{align*}
Since by assumption $L_2^3 e^A\leq n/2$,
we can sum over $i$ to obtain
\begin{align*}
\sum_{i\geq 1}\sum_{C_2:C_2\setminus C_1 =i \text{ paths}}  \E[U_{C_2}|U_{C_1}]
&\leq 2 \frac{L_2^3 e^A}{n} (1+\delta)^m.
\end{align*}
Combining this with the case $C_2=C_1$, and using $m\leq L_2$, we obtain
\[\sum_{C_2: |C_2\cap C_1|\geq 1}  \E[U_{C_2}|U_{C_1}=1]
 \leq 1 + 2 \frac{L_2^3 e^A}{n}(1+\delta)^{L_2}. \qedhere\]
\end{proof}

We now have the ingredients to prove our upper bound on the minimum mean weight:

\begin{theorem} \label{thm:max-weight}
  For both the directed and undirected complete graph, with probability
  $1-o(1)$ the mean weight of the minimum mean-weight cycle is at most
  $$\frac{\displaystyle 1+\frac{\pi^2/2+o(1)}{\log^2 n}}{en}.$$
\end{theorem}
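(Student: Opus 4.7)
The plan is to apply the second moment method to the random variable $Y := Y^{L_1,L_2}_{\delta,A}$ counting $A$-uniformly $(1+\delta)/e$-light cycles of length in $(L_1, L_2]$ (the argument is identical for the directed and undirected graph up to a factor of two in $\E[Y]$, which is absorbed). By the Paley--Zygmund inequality,
$$\Pr[Y > 0] \geq \frac{\E[Y]^2}{\E[Y^2]} = \frac{1}{1 + \Var[Y]/\E[Y]^2},$$
so if parameters can be chosen with $\E[Y]\to\infty$ and $\Var[Y]/\E[Y]^2 \to 0$, then with probability $1-o(1)$ there exists a $(1+\delta)/e$-light cycle, giving the claimed upper bound $(1+\delta)/(en)$ on the min mean-weight cycle.

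The two inputs are \lref{lemma:averagebig} and \lref{lemma:secondmoment}. When $\delta > \pi^2/(2A^2)$, the summands in the first-moment formula grow geometrically in $L$, so $\E[Y]$ is dominated by its top term: $\E[Y] \gtrsim L_2^{-3/2}(1+\delta)^{L_2}\exp[-(\pi^2/2+o(1))L_2/A^2]$. Plugging this into the variance bound, the two conditions $\E[Y]\to\infty$ and $(1 + 2L_2^3 e^A(1+\delta)^{L_2}/n)/\E[Y] \to 0$ simplify (after taking logs and using $\log(1+\delta)\sim\delta$) to
\begin{align*}
\text{(i)}\quad & L_2\bigl(\delta - \tfrac{\pi^2}{2A^2}\bigr) \;\gg\; \log L_2, \\
\text{(ii)}\quad & A + \tfrac{L_2\pi^2}{2A^2} + \tfrac{9}{2}\log L_2 \;\leq\; \log n - \omega(1).
\end{align*}

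To match the theorem's $\delta = (\pi^2/2+o(1))/\log^2 n$, write $A = \log n - \alpha$ and take $\delta$ slightly above $\pi^2/(2A^2) = \pi^2(1+O(\alpha/\log n))/(2\log^2 n)$. Condition~(ii) becomes $\alpha \geq L_2\pi^2/(2\log^2 n) + (9/2)\log L_2 + \omega(1)$, forcing $L_2 = o(\log^3 n)$ if $\alpha$ is to be $o(\log n)$; condition~(i), combined with the requirement $\delta - \pi^2/(2A^2) = o(1/\log^2 n)$, forces $L_2 = \omega(\log^2 n \log\log n)$. Both are simultaneously met for any $L_2$ with $\log^2 n \log\log n \ll L_2 \ll \log^3 n$; concretely, take $L_2$ a slowly growing multiple of $\log^2 n\log\log n$, $L_1 = L_2/2$, and $\alpha,\delta$ just above their thresholds. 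The ancillary side conditions of the two lemmas ($L_2^2/n\to 0$, $L_1/A^2\to\infty$, $L_2/A^3\to 0$, $L_2^3 e^A/n\leq 1/2$) are all routine to check in this regime.

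The principal obstacle is this two-sided balancing of $A$ against $L_2$. We want $A$ as close as possible to $\log n$ so that $\pi^2/(2A^2)$ approaches the target $\pi^2/(2\log^2 n)$, but the variance hypothesis $L_2^3 e^A \leq n/2$ of \lref{lemma:secondmoment} forces $A \leq \log n - 3\log L_2 - O(1)$, which pushes $A$ away from $\log n$ by an amount proportional to $\log L_2$. Only once $L_2$ exceeds $\log^2 n\log\log n$ can the resulting slack be absorbed, and only while $L_2$ stays below $\log^3 n$ does $\alpha$ remain $o(\log n)$. This narrow sweet spot is precisely what produces the constant $\pi^2/2$ in the theorem (and is the source of the $(2/\pi^2 - o(1))\log^2 n \log\log n$ length lower bound announced in the abstract).
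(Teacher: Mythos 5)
Your proposal is correct and follows essentially the same approach as the paper: apply the second moment method to $Y^{L_1,L_2}_{\delta,A}$ via \lref{lemma:averagebig} and \lref{lemma:secondmoment}, then balance $A$ close to $\log n$, $L_2$ of order $\log^2 n\log\log n$, and $\delta$ just above $\pi^2/(2A^2)$. Your linearized constraints (i) and (ii) are a cleaner rephrasing of the paper's list of constraints, and the choice $L_1 = L_2/2$ in place of the paper's $L_1 = L_2-1$ is inessential, but the balancing that produces the constant $\pi^2/2$ is the same.
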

\begin{proof}
We want to show that with high probability there are $(1+\delta)/e$-light cycles,
and we do this by showing that in fact there are $A$-uniformly $(1+\delta)/e$-light cycles
with high probability.  For any real-valued random variable $Y$ we have $\Pr[Y=0]\leq\Var[Y]/\E[Y]^2$.
We choose the parameters $A$, $L_1$, and $L_2$ so
that $\Var\big[Y^{L_1,L_2}_{\delta,A}\big]\ll \E\big[Y^{L_1,L_2}_{\delta,A}\big]^2$,
which will imply that with high
probability $Y^{L_1,L_2}_{\delta,A}>0$, i.e., that there is an $A$-uniformly $(1+\delta)/e$-light cycle with
size between $L_1$ and $L_2$.

We gather our constraints,
which are the same for both the directed and undirected complete graph:
\begin{align*}
L_1/A^2 & \gg 1\\
L_2/A^3 & \ll 1\\
\sum_{L=L_1+1}^{L_2}\frac{(1+\delta)^L}{L^{3/2}} \exp[-(\pi^2/2+o(1)) L/A^2]
 &\gg 1 + \frac{2 L_2^3 e^A (1+\delta)^{L_2}}{n} \\
\frac{2L_2^3 e^A}{n}&\leq 1\\
L_2^2 & \ll n
\end{align*}

Form the third
constraint above we need $\delta > (1+o(1))\pi^2/(2 A^2)$,
since otherwise the left-hand side would be smaller than one.
From the fourth constraint we need $A\leq\log n$,
so we need \[\delta\geq \frac{\pi^2+o(1)}{2\log^2 n}.\]
In order to make $\delta$ this small, we need $A=(1+o(1))\log n$ and $L_2\gg \log^2 n \log\log n$,
since otherwise the left-hand side of the third constraint would be smaller than one.
The second constraint then requires $L_2 \ll \log^3 n$.  Choosing $L_1$ and $L_2$ to include
many cycle lengths $L$ has the advantage of increasing $\E\big[Y^{L_1,L_2}_{\delta,A}\big]$, but this
increase is dwarfed by the exponential factors, and ultimately only affects the $o(1)$ term,
so we may as well take $L_1=L_2-1$.

We can make $\delta$ nearly as small as this bound by picking the following parameter values.
Let $\eps>0$
be an arbitrarily small positive constant.
\begin{align*}
A &= (1-\eps)\log n \\
\delta &= \frac{\pi^2/2 + 13\eps}{\log^2 n} \\
L_2 &= \frac{1}{\eps} \log^2 n \log\log n \\
L_1 &= L_2-1.
\end{align*}
It is straightforward to verify that for sufficiently small fixed
$\eps$, the above values satisfy the preceding constraints for all
sufficiently large $n$.
\end{proof}

The upper bound on the mean weight of the minimum mean-weight cycle is a key ingredient 
in bounding its length from below in the supercritical regime.

\begin{lemma}\label{lemma:averagesmall}
  If $\delta\ll 1$,
  then
  with high probability there are no cycles of length at most
\[ \frac{\log\delta^{-1}}{2\delta}\]
  that are $(1+\delta)/e$-light but not $1/e$-light.
  (This holds for both the directed and undirected complete graph.)
\end{lemma}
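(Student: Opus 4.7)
The plan is a straightforward first-moment calculation. By the ``for any $k$'' case of \tref{thm:EZ} together with Stirling's lower bound $k!\geq\sqrt{2\pi k}\,(k/e)^k$, the expected number of $k$-cycles which are $(1+\delta)/e$-light but not $1/e$-light is at most
\[
\frac{k^k}{k\cdot k!}\cdot\frac{(1+\delta)^k-1}{e^k}\;\leq\;\frac{(1+\delta)^k-1}{\sqrt{2\pi}\,k^{3/2}}
\]
in the directed setting (with an additional factor of $1/2$ in the undirected setting, which is irrelevant for this argument). Writing $K=\log(\delta^{-1})/(2\delta)$, my goal is to show that
\[
S(\delta)\;:=\;\sum_{k=2}^{K}\frac{(1+\delta)^k-1}{k^{3/2}}\;=\;o(1)\qquad\text{as }\delta\to0,
\]
at which point Markov's inequality (applied to the integer-valued count of such cycles) yields the lemma.

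For the bound on $S(\delta)$, I would use the elementary estimate $(1+\delta)^k-1\leq k\delta(1+\delta)^{k-1}\leq k\delta\,e^{k\delta}$, yielding
\[
S(\delta)\;\leq\;\delta\sum_{k=2}^{K}\frac{e^{k\delta}}{\sqrt{k}}\;=\;O\!\left(\sqrt{\delta}\int_{0}^{\log(\delta^{-1})/2}\!\frac{e^v}{\sqrt{v}}\,dv\right),
\]
after comparing the sum with its integral and substituting $v=k\delta$. Since $e^v/\sqrt{v}$ is integrable at $0$ and the integral is dominated by its upper endpoint (as $\int_{0}^{M}e^v/\sqrt{v}\,dv\sim e^M/\sqrt{M}$ for large $M$), its value is $\sim e^{\log(\delta^{-1})/2}/\sqrt{\log(\delta^{-1})/2}=\delta^{-1/2}\cdot O(1/\sqrt{\log\delta^{-1}})$. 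Multiplying by the $\sqrt{\delta}$ prefactor gives $S(\delta)=O(1/\sqrt{\log\delta^{-1}})\to0$, which is the desired estimate.

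The only delicate point is that the length cutoff is essentially tight: the constant $1/2$ in $K=\log(\delta^{-1})/(2\delta)$ is chosen precisely so that $e^{K\delta}=\delta^{-1/2}$, which exactly cancels the $\sqrt{\delta}$ prefactor coming from the substitution and leaves only logarithmic decay. A cutoff $c\log(\delta^{-1})/\delta$ with any fixed $c>1/2$ would produce a first-moment sum of order $\delta^{1/2-c}$, which diverges as $\delta\to0$; so this elementary method cannot extend the bound further. This is consistent with \tref{thm:max-weight}, which only guarantees the existence of $(1+\delta)/e$-light cycles at length scale $\Omega(\log^2 n\log\log n)=\Omega(\log(\delta^{-1})/\delta)$ when $\delta=\Theta(1/\log^2 n)$: the lemma, together with that theorem, shows that the supercritical min mean-weight cycle lives in a length window sharply separated from the short cycles of the subcritical regime.
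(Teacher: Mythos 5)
Your proof is correct and follows essentially the same route as the paper's: a first-moment bound on the count via Theorem~\ref{thm:EZ} and Stirling, reducing the problem to showing $\sum_{k\leq K}[(1+\delta)^k-1]/k^{3/2}=o(1)$, then Markov. The only difference is technical: the paper splits at $k=1/\delta$ and uses the two separate estimates $(1+\delta)^k-1\leq(e-1)\delta k$ and $(1+\delta)^k-1\leq e^{\delta k}$ with a geometric-blocking argument for the tail, whereas you apply the single bound $(1+\delta)^k-1\leq k\delta\,e^{k\delta}$ throughout and compare the resulting sum to $\int e^v/\sqrt v\,dv$. Your unified bound is slightly wasteful for $k\delta\gg 1$ (it costs an extra factor of $k\delta$ there), which is why you land on $O(1/\sqrt{\log\delta^{-1}})$ rather than the paper's $O(1/(\log\delta^{-1})^{3/2})$; both are $o(1)$, so the lemma follows either way. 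Your closing observation that the constant $1/2$ in the cutoff is exactly what makes $e^{K\delta}=\delta^{-1/2}$, so that the method saturates there, is accurate and matches the role the paper assigns to this lemma alongside Theorem~\ref{thm:max-weight}.
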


\begin{proof}
  Using the inequality version of the first-moment
  estimate~\eqref{eq:zk} and Stirling's formula, the expected number of cycles of length
  $\leq L_0$ that are $(1+\delta)/e$-light but not $1/e$-light is at most
  $$ \sum_{k=1}^{L_0} \frac{1} {\sqrt{2\pi} k^{3/2}}[(1+\delta )^k -1]. $$
  For $k\leq 1/\delta$ we can use the bound
  $$(1+\delta)^k - 1 \leq (e-1)\delta k,$$
  and find that the first $1/\delta$ terms (if there are that many) sum up to at most
  $$ \frac{e-1}{\sqrt{2\pi}} \int_0^{1/\delta} k^{-1/2}\delta\, dk = \frac{e-1}{\sqrt{\pi/2}}\,\delta^{1/2} = o(1),$$
  since by assumption $\delta\ll 1$.

  For the remaining terms we use the bound
  $$(1+\delta)^k - 1 \leq e^{\delta k},$$
  and group the terms into blocks of $\lceil 1/\delta\rceil$ terms (except that the last block may have fewer terms).
  For the block containing $k$, the block sum is $O(e^{\delta k} / (\delta k^{3/2}))$.  These
  block-sum bounds increase geometrically, so the total is
  $$O\left(\frac{e^{\delta L_0}}{\delta L_0^{3/2}}\right).$$
  Taking $L_0 = (\log\delta^{-1})/(2\delta)$, we have $e^{\delta L_0} = \delta^{-1/2}$,
  so the above bound is $O(1/(\delta L_0)^{3/2}) = O(1/(\log\delta^{-1})^{3/2})=o(1)$.
\end{proof}

We can now bound from below the length of the supercritical minimal mean-weight cycle:

\begin{theorem}
  For both the directed and undirected complete graph, conditional upon
  the min mean-weight cycle having mean weight $>1/(en)$, with probability
  $1-o(1)$ its length is at least
  $$(2/\pi^2-o(1)) \log^2 n \log\log n.$$
\end{theorem}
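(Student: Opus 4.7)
The plan is to combine the upper bound from \tref{thm:max-weight} with the ``no short imperfectly-light cycles'' bound from \lref{lemma:averagesmall}. The idea is that if the min mean-weight cycle has mean weight $>1/(en)$, then by \tref{thm:max-weight} it must nonetheless lie in the thin sliver of weights between $1/(en)$ and $(1+\delta)/(en)$, where $\delta=(\pi^2/2+o(1))/\log^2 n$; and \lref{lemma:averagesmall} says cycles whose mean weight falls in exactly this sliver cannot be short.

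Concretely, I would proceed as follows. First, fix $\delta=(\pi^2/2+\eta)/\log^2 n$ for an arbitrary constant $\eta>0$; by \tref{thm:max-weight}, the event
\[
A \;=\; \{\text{some cycle has mean weight} \leq (1+\delta)/(en)\}
\]
satisfies $\Pr[A]=1-o(1)$. Second, since $\delta\to 0$, \lref{lemma:averagesmall} applies and the event
\[
B \;=\; \{\text{no cycle of length} \leq \log(\delta^{-1})/(2\delta) \text{ is } (1+\delta)/e\text{-light but not } 1/e\text{-light}\}
\]
also satisfies $\Pr[B]=1-o(1)$. Plugging in $\delta=(\pi^2/2+\eta)/\log^2 n$ yields $\log(\delta^{-1})=(2+o(1))\log\log n$ and $1/(2\delta) = (1/\pi^2+o(1))\log^2 n$, so the length threshold in $B$ is $(2/\pi^2 - o(1))\log^2 n\log\log n$, matching the claim.

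Third, let $E$ be the conditioning event that the min mean-weight cycle has mean weight $>1/(en)$. On $A\cap B\cap E$, the min mean-weight cycle is $(1+\delta)/e$-light (by $A$) but not $1/e$-light (by $E$), so by $B$ its length exceeds $(2/\pi^2-o(1))\log^2 n\log\log n$. Since $\Pr[A^c\cup B^c]=o(1)$ unconditionally and $\Pr[E]$ is bounded below by a positive constant (from \tref{thm:c} and \tref{thm:pk}, $\lim_n \Pr[E] = 1-\sum_k p_k > 0$ for both the directed and undirected complete graphs), we conclude $\Pr[A\cap B\mid E]=1-o(1)$, which gives the theorem.

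The main subtlety—more a bookkeeping point than a genuine obstacle—is just matching the $\delta$ produced by \tref{thm:max-weight} with the one needed to make $\log(\delta^{-1})/(2\delta)$ equal to the target length threshold; since $\log(\delta^{-1})/(2\delta)$ is monotone in $\delta$ for small $\delta$, any $\delta$ at least as large as the one guaranteed by \tref{thm:max-weight} will do, and taking $\delta = (\pi^2/2+\eta)/\log^2 n$ with $\eta\to 0$ slowly yields the $o(1)$ correction in the coefficient $2/\pi^2$. Note also that the conditioning is harmless because $\Pr[E]=\Theta(1)$, so dividing by it preserves the $o(1)$ error.
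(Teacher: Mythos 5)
Your proposal is correct and is precisely the paper's argument: the paper states that the result is immediate from Theorem~\ref{thm:max-weight}, Lemma~\ref{lemma:averagesmall}, and the fact that the conditioning event has probability bounded away from zero (Theorem~\ref{thm:c}). You have simply written out the bookkeeping, including the computation that $\log(\delta^{-1})/(2\delta) = (2/\pi^2 - o(1))\log^2 n \log\log n$ when $\delta = (\pi^2/2 + o(1))/\log^2 n$, which the paper leaves implicit.
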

\begin{proof}
  Immediate from Theorem~\ref{thm:max-weight} and Lemma~\ref{lemma:averagesmall},
  together with the fact (Theorem~\ref{thm:c}) that the event being conditioned on
  has probability bounded away from $0$.
\end{proof}

\section*{Acknowledgements}
We thank Yuval Peres for helpful suggestions.  We also thank the referee and Oliver Riordin for helpful comments on the manuscript.

\phantomsection
\pdfbookmark[1]{References}{bib}
\bibliographystyle{hmralphaabbrv}
\bibliography{minmean}
\end{document}